\newtheorem{secthm}{Theorem}[section]
\newtheorem{secexa}[secthm]{Example}
\newtheorem{secprop}[secthm]{Proposition}
\newtheorem{secdefn}[secthm]{Definition}
\newtheorem{secrem}[secthm]{Remark}
\newcommand{\cK} { {\cal K}}
\newcommand{\cH} { {\cal H}}
\newcommand{\bR} { {\bf R}}
\newcommand{\bC} { {\bf C}}
\def\red{\hfill $\lhd$}
\begin{document}
\title{Nonlinear Eigenvalue Approach to Differential Riccati Equations for Contraction Analysis}
\author{Yu~Kawano and
        Toshiyuki~Ohtsuka
\thanks{Y. Kawano and T. Ohtsuka are with the Department of Systems Science, Graduate School of Informatics,
        Kyoto University, Sakyo-ku, Kyoto 606-8501, Japan
        (e-mail: ykawano@i.kyoto-u.ac.jp; ohtsuka@i.kyoto-u.ac.jp).}
\thanks{This work was partly supported by JSPS KAKENHI Grant Numbers
	15K18087 and 15H02257 and JST CREST.}
}

\maketitle

\begin{abstract}
In this paper, we extend the eigenvalue method of the algebraic Riccati equation to the differential Riccati equation (DRE) in contraction analysis. One of the main results is showing that solutions to the DRE can be expressed as functions of nonlinear eigenvectors of the differential Hamiltonian matrix. Moreover, under an assumption for the differential Hamiltonian matrix, real symmetricity, regularity, and positive semidefiniteness of solutions are characterized by nonlinear eigenvalues and eigenvectors.
\end{abstract}
\begin{keywords}
Nonlinear systems, differential Riccati equations, nonlinear eigenvalues, contraction analysis.
\end{keywords}

\IEEEpeerreviewmaketitle

\section{Introduction}
In this paper, we present a novel eigenvalue method for the differential Riccati equation (DRE) in contraction analysis. Contraction and incremental analysis have been studied intensively in recent decades, which deals with trajectories of nonlinear systems with respect to one another \cite{FS:14,MS:13,MS:14,Schaft:13,FS:13,KS}. One of the interesting ideas of contraction theory is considering the infinitesimal metric instead of a feasible distance function by lifting of functions and vector fields on manifold to their tangent and cotangent bundles, which is one of the differences from classical nonlinear geometric control theory. In such theoretical frameworks, for instance, the Lyapunov theorem \cite{FS:14}, optimal control \cite{MS:13}, $H^{\infty}$ control \cite{MS:14}, dissipativity \cite{Schaft:13,FS:13}, and balanced truncation \cite{KS} have been studied. 

In the optimal control in the contraction framework, a kind of Riccati equation that we call a DRE plays an important role, which is a nonlinear partial differential equation for an unknown matrix whose elements are functions of the state and time. The DRE can be viewed as an extension of algebraic and differential Riccati equations for linear time-invariant and variant systems rather than as a Hamilton-Jacobi equation (HJE). 

One of the most important analysis methods for the algebraic Riccati equation (ARE) is the eigenvalue method \cite{pot,mac,lau}. This method shows that solutions to the ARE, a nonlinear algebraic equation, can be described as functions of eigenvectors of the Hamiltonian matrix, and in terms of eigenvalues and eigenvectors, real symmetricity, regularity, and positive semidefiniteness of solutions have been studied. This method has been extended to the DRE for linear periodic systems \cite{pastor1993differential,kano1979periodic,razzaghi1979computational}, which is different from the equation considered in this paper. 

Our main concern in this paper is extending the eigenvalue method to the DRE in contraction analysis in terms of recently introduced nonlinear eigenvalues and eigenvectors \cite{KO:15,label6965}. First, we demonstrate that solutions to the DRE can be expressed as functions of nonlinear right eigenvectors of the corresponding Hamiltonian matrix as in the linear case. Next, we investigate its solution structures when nonlinear right eigenvectors of the Hamiltonian matrix span the entire space. In this case, a nonlinear right eigenvalue is also a left eigenvalue and vice versa, and if $\lambda$ is an (right or left) eigenvalue of the Hamiltonian matrix, then $-\lambda$, the complex conjugate of $\lambda$ denoted by $\lambda^*$, and $-\lambda^*$ are also eigenvalues similarly to the linear case. Moreover, we study real symmetricity, regularity, and positive semidefiniteness of solutions to the DRE in terms of nonlinear eigenvalues.

The nonlinear left and right eigenvalues and eigenvectors of the Jacobian matrix of a vector field correspond to a one-dimensional invariant distributions and codistributions, respectively, in the time-invariant case. A similar concept can be found in Koopman operator theory \cite{mauroy2014global}. The Koopman eigenfunction coincides with an invariant space under the Lie derivative of a function. The concepts of nonlinear eigenvalues are originally introduced in non-commutative algebra in relation to the pseudo-linear transformation (PLT) \cite{label3043,label9353}. The PLT can be interpreted as a generalized notion of linear transformation to differential one-forms. Non-commutative algebra and the PLT are used for analysis of linear time-varying and nonlinear control systems \cite{ilchmann2005algebraic,label1618}. In contrast to nonlinear systems, there is no application of such eigenvalues to linear time-varying systems. Some papers on the DRE of periodic systems \cite{pastor1993differential,kano1979periodic,razzaghi1979computational} do not use such eigenvalues of the Hamiltonian matrix. Those papers use eigenvalues of the transition matrix of the Hamiltonian matrix in the sense of linear algebra instead.

{\bf Notations:} Let $\bR$ and $\bC$ be the fields of real and complex numbers, respectively. Let $\cK_{\bR}$ be the field of the real meromorphic functions in variables $x_1,x_2,\dots,x_n,t$. Let $\cK$ be the set of functions $\{a+bj:a,b\in\cK_{\bR}\}$, where $j$ is the imaginary unit, and the domain of definition of both $a$ and $b$ is $\bR^n\times \bR$. Note that $\cK_{\bR}\subset \cK$, and $\cK$ is a field. Then, $\cK^{2n}$ is a vector space over $\cK$. The reason we consider (the not commonly used) field $\cK$ is that we exploit a concept of nonlinear eigenvalue of matrix $A\in \cK^{n\times n}$. As will be shown in Example~\ref{lieig}, for a constant matrix $M\in\bC^{n\times n}$, the set of nonlinear eigenvalues contains the set of eigenvalues in linear algebra. Since a linear eigenvalue can be a complex number even if $M$ is in $\bR^{n\times n}$, a nonlinear eigenvalue can be an element in $\cK$ even if matrix $A$ is an element in $\cK_{\bR}^{n\times n}$. Therefore, we consider field $\cK$ in this paper.

For a scalar-valued function $V(x,t)\in\cK$, we denote a row vector consisting of the partial derivatives of $V$ with respect to $x_i$ $(i=1,2,\dots,n)$ as $\partial V/\partial x$, and we denote $\partial^{\rm T} V/\partial x:=(\partial V/\partial x)^{\rm T}$.  For matrix $A(x,t)\in\cK^{r_1\times r_2}$, ${\rm rank}_{\cK}A(x,t)=r$ means that the rank of $A(x,t)$ over field $\cK$ is $r$. In particular, if $r_1=r_2=r$, $A$ is said to be regular.

Next, we introduce an operator $\delta_f:\cK\to\cK$. By using real analytic vector-valued function $f(x,t):\bR^n\times\bR\to\bR^n$, operator $\delta_f:\cK\to\cK$ is defined as
\begin{eqnarray}
\delta_f(a(x,t))=\frac{\partial a(x,t)}{\partial t} +\frac{\partial a(x,t)}{\partial x}f(x,t), \ a(x,t)\in\cK.\label{delta}
\end{eqnarray}
Field $\cK$ is a differential field with respect to $\delta_f$. For matrix $X(x,t)=(X_{ij}(x,t))\in \cK^{n\times n}$, $\delta_f (X(x,t))$ denotes the matrix whose $(i,j)$th element is $\delta_f(X_{ij}(x,t))$. Operator $\delta_f$ coincides with the time derivative of a function along a solution to $\dot x(t)=f(x(t),t)$ because the time derivative of $a(x(t),t)$ is $da(x(t),t)/dt=\partial a(x(t),t)/\partial t+(\partial a(x(t),t)/\partial x)f(x(t),t)$. In systems and control, in general, we study a real-valued vector field. Thus, we assume that $f$ is real-valued. Throughout this paper, we leave out arguments of functions when these are clear from the context.
\section{Eigenvalue Approach}\label{Ea:s}
\subsection{Differential Riccati Equation}
Let each element of $n\times n$ matrices $A(x,t)$, $R(x,t)=R^{\rm T}(x,t)$, and $Q(x,t)=Q^{\rm T}(x,t)$ be real analytic. In this paper, we study the following equation for unknown matrix $X(x,t)\in \cK^{n\times n}$:
\begin{eqnarray}
&&\delta_f (X(x,t))+X(x,t)A(x,t)+A^{\rm T} (x,t)X(x,t)\nonumber\\
&&-X(x,t)R(x,t)X(x,t)=-Q(x,t).\label{ric}
\end{eqnarray}
Equation (\ref{ric}) is a generalization of the algebraic Riccati equation (ARE), and thus we call (\ref{ric}) a (generalized) differential Riccati equation (DRE). A real symmetric and positive definite solution plays an important role in systems and control theory such as that in contraction analysis \cite{FS:14,MS:13}.
\begin{secexa}\label{contra:ex}
A stabilizing controller is designed by using a solution to a DRE.
   Consider a time-invariant real analytic system 
    \begin{eqnarray*}   
   \dot x(t) = f(x(t)) + B u(t),
   \end{eqnarray*}
   where $x\in \bR^n$ and $u\in \bR^m$. For $A= \partial f/\partial x$, $R=BB^{\rm T}$, and symmetric and positive definite $Q(x)$ at each $x\in\bR^n$, suppose that a DRE
    \begin{eqnarray}
    \delta_f(X)+ X \frac{\partial f}{\partial x}+ \frac{\partial^{\rm T} f}{\partial x} X -X B B^{\rm T}X =-Q \label{conlya}
    \end{eqnarray}
   has a symmetric and positive definite solution $X(x)$ at each $x\in\bR^n$. Here, we show that if $X$ satisfies $(\partial X_{ij}/\partial x)B=0$, and if there exists a vector-valued function $k(x)\in\cK^m$ such that $\partial k/\partial x=B^{\rm T} X$ then $u=-k(x)$ is a stabilizing controller. Under these assumptions, (\ref{conlya}) can be rearranged as
\begin{eqnarray*}
&&\delta_{f-Bk}(X) + X \frac{\partial (f - B k)}{\partial x} + \frac{\partial^{\rm T} (f-B k)}{\partial x} X\\
&&=-Q -X B B^{\rm T}X.
\end{eqnarray*}
We notice that  $V(x,\delta x):=\delta x^{\rm T}X\delta x$ is a contraction Riemannian metric for the closed loop system and its variational system.
\begin{eqnarray}
&&\dot x=f(x)-B k(x),\label{clEx}\\
&&\frac{d}{dt}\delta x(t)=\frac{\partial (f(x)-B k(x))}{\partial x}\delta x(t).\nonumber
\end{eqnarray}
According to \cite{FS:14}, the closed loop system is incrementally globally asymptotically stable. Roughly speaking, any pair of trajectories of the closed-loop system converges to each other. If the system has an unique equilibrium point, the system is globally asymptotically stable. In summary, by solving DRE (\ref{conlya}), we can construct a stabilizing controller $u=-k=-\int B^{\rm T} X dx$. A similar result has been obtained for time-varying systems, and the integrability condition of $B^{\rm T}X$ was dropped by using a line integral \cite{MS:13}. \red
\end{secexa}

Other applications of the DRE are, for instance, incremental optimal control \cite{MS:13}, $L^2$-gain analysis \cite{MS:14}, and balanced truncation \cite{KS} problems. In linear systems and control theory, optimal and $H^{\infty}$ controllers are designed by solving AREs. These results are extended in the contraction framework by using DREs \cite{MS:13,MS:14}. Moreover, the so-called differential balanced realization \cite{KS} is defined by using Lyapunov types of equations, which are specific DREs for $R\equiv 0$. The differential balanced realization is used for the model reduction. In these optimal control and balanced truncation problems, symmetric and positive definite solutions to DREs are used. Since the DRE is a nonlinear partial differential equation for an unknown matrix, the structures of solutions have not been adequately studied. That is, it is unclear when a symmetric and positive definite solution exists. Here, our concern is investigating the solution structures by using nonlinear eigenvalues and eigenvectors \cite{KO:15,label6965}. That is, we extend the eigenvalue method of the ARE \cite{pot,mac,lau}.
\subsection{Generalized Hamiltonian Matrix}
Solutions to the ARE are characterized by the eigenvalues and eigenvectors of the Hamiltonian matrix. The counterpart of the Hamiltonian matrix to the DRE is
\begin{eqnarray}
\cH(x,t):=
\left[\begin{array}{cc}
A(x,t) & -R(x,t) \\
-Q(x,t) & -A^{\rm T}(x,t)
\end{array}\right].\label{HM}
\end{eqnarray}
We call this $\cH (x,t) \in \cK_{\bR}^{2n\times 2n}$ a (generalized) differential Hamiltonian matrix. Since the elements of $A$, $R=R^{\rm T}$, and $Q=Q^{\rm T}$ are real analytic, the elements of $\cH$ are also real analytic.

Next, we show the definition of the nonlinear eigenvalues and eigenvectors \cite{KO:15,label9353,label6965}.
\begin{secdefn}\label{eig:d}
Consider $\delta_f$ defined in (\ref{delta}).  Let $M\in\cK^{n\times n}$.
\begin{enumerate}
\item $v\in\cK^{n}\setminus\{0\}$ is a left eigenvector for $M$ associated with left eigenvalue $\alpha\in\cK$ if $v^{\rm T} M+\delta_f(v)^{\rm T} =v^{\rm T} \alpha$.
\item $w\in\cK^{n}\setminus\{0\}$ is a right eigenvector for $M$ associated with right eigenvalue $\beta\in\cK$ if $M w-\delta_f(w)=\beta w$. 
\end{enumerate}
Moreover, the sets of left and right eigenvalues of $M$ are denoted by ${\rm lspec}_f(M)$ and ${\rm rspec}_f(M)$, respectively.
\end{secdefn}

Nonlinear eigenvalues relate to invariant spaces when $M=\partial f/\partial x$. The definitions of left and right eigenvalues are respectively rearranged as ${\cal L}_f (v^{\rm T}dx) = \alpha (v^{\rm T}dx)$ with the Lie derivative of one-forms along $f$ and $[w,f]=\beta w$ with the Lie bracket of vector fields. Thus, $v^{\rm T} dx$ and $w$ are respectively one-dimensional invariant codistribution and distribution. 

\begin{secexa}\label{lieig}
	In the linear case when $M \in \bR^{n\times n}$, the first (or second) equation in Definition \ref{eig:d} holds for linear eigenvalue $\alpha\in\bC$ and left eigenvector $v\in\bC^n$ (or $\beta\in\bC$ and right eigenvector $v\in\bC^n$). Thus, the linear eigenvalue and eigenvector are a nonlinear eigenvalue and eigenvector.\red
\end{secexa}

Nonlinear eigenvalues have similar properties to those in linear algebra. These are invariant under the $\delta_f$-conjugacy defined below, which relates to a change of basis over a differential field. Let $\{v_1,\dots,v_n\}$ and $\{w_1,\dots,w_n\}$ be bases for $\cK^n$. Then, there exist matrices $M,N\in\cK^{n\times n}$ such that $[\delta_f(v_1) \ \dots \ \delta_f(v_n)]=M[v_1 \ \dots \ v_n]$ and $[\delta_f(w_1) \ \dots \ \delta_f(w_n)]=N [w_1 \ \dots \ w_n]$. For two bases, there exists a regular matrix $T\in\cK^{n\times n}$ such that $[v_1,\dots,v_n]=T[w_1,\dots,w_n]$. By applying $\delta_f$ from the left, we have
\begin{eqnarray*}
M [v_1,\dots,v_n]&=&\delta_f(T) [w_1,\dots,w_n] + TN [w_1,\dots,w_n]\\
&=&(TN+\delta_f(T)) T^{-1} [v_1,\dots,v_n].
\end{eqnarray*}
Since $\{v_1,\dots,v_n\}$ is a basis, we have $M=(TN+\delta_f(T))T^{-1}$. This pair of matrices $(M,N)$ is said to be $\delta_f$-conjugate.
\begin{secdefn}\label{dconj}\cite{label3043,label9353}
A pair of matrices $(M,N)\in \cK^{n\times n} \times \cK^{n\times n}$ is $\delta_f$-conjugate (with respect to $T$) if there exists a regular matrix $T \in \cK^{n\times n}$ such that $M=(TN+\delta_f(T))T^{-1}$ holds.
\end{secdefn}
	\begin{secexa}
		When $n=1$, we have the definition of $\delta_f$-conjugation for elements in $a,b\in\cK$ \cite{label3043,label9353}.  A pair $(a,b)$ is $\delta_f$-conjugate if $b=a+\delta_f(c)/c$ for non-zero $c\in\cK$. \red
	\end{secexa}
\begin{secprop}\label{lreig:p}\cite{label3043,label9353} Let $M$ be in $\cK^{n\times n}$.
    \begin{enumerate}
        \item Let $(a,b)\in \cK\times\cK$ be $\delta_f$-conjugate. If $a\in{\rm lspec}_f(M)$ (or $a\in{\rm rspec}_f(M)$) then $b\in{\rm lspec}_f(M)$ (or $b\in{\rm rspec}_f(M)$) .
        \item If $(M,N)$ is $\delta_f$-conjugate, ${\rm rspec}_f(M)={\rm rspec}_f(N)$ and ${\rm lspec}_f(M)={\rm lspec}_f(N)$. \red
    \end{enumerate}
\end{secprop}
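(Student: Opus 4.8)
The plan is to establish both parts by transforming eigenvectors, the only tool needed being that $\delta_f$ is a derivation on $\cK$, so that $\delta_f(cv)=\delta_f(c)\,v+c\,\delta_f(v)$ for a scalar $c\in\cK$ and a vector $v$, and $\delta_f(T^{-1})=-T^{-1}\delta_f(T)T^{-1}$, the latter obtained by applying $\delta_f$ to $TT^{-1}=I$.

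For part~1, I would start from a left eigenpair $(a,v)$ of $M$, i.e. $v^{\rm T}M+\delta_f(v)^{\rm T}=av^{\rm T}$ with $v\neq0$, and use the scalar conjugacy of $(a,b)$ in the form $b=a+\delta_f(c)/c$ for some nonzero $c\in\cK$. Testing the rescaled vector $v':=cv$ and expanding $\delta_f(v')$ by the product rule gives $v'^{\rm T}M+\delta_f(v')^{\rm T}=(ca+\delta_f(c))v^{\rm T}=b\,v'^{\rm T}$, so $b\in{\rm lspec}_f(M)$, the new eigenvector $v'$ being nonzero because $c\neq0$. The right-eigenvalue case runs identically except that scaling a right eigenvector $w$ to $cw$ yields the eigenvalue $a-\delta_f(c)/c$ rather than $a+\delta_f(c)/c$; since replacing $c$ by $1/c$ negates $\delta_f(c)/c$, a single $c$ realizing either sign exists, and both cases close.

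For part~2, I would carry the eigenvectors through the conjugating matrix $T$, where $M=(TN+\delta_f(T))T^{-1}$. Given a right eigenpair $(\beta,w)$ of $N$, the natural candidate is $z:=Tw$: substituting $Mz=(TN+\delta_f(T))w$ and $\delta_f(z)=\delta_f(T)w+T\delta_f(w)$, the $\delta_f(T)w$ terms cancel and leave $Mz-\delta_f(z)=T(Nw-\delta_f(w))=\beta z$, so $\beta\in{\rm rspec}_f(M)$. For a left eigenpair $(\alpha,v)$ of $N$, the candidate is $u^{\rm T}:=v^{\rm T}T^{-1}$; here the identity $\delta_f(T^{-1})=-T^{-1}\delta_f(T)T^{-1}$ makes the two $T^{-1}\delta_f(T)T^{-1}$ contributions cancel, leaving $u^{\rm T}M+\delta_f(u)^{\rm T}=(v^{\rm T}N+\delta_f(v)^{\rm T})T^{-1}=\alpha u^{\rm T}$, so $\alpha\in{\rm lspec}_f(M)$. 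This yields the inclusions ${\rm rspec}_f(N)\subseteq{\rm rspec}_f(M)$ and ${\rm lspec}_f(N)\subseteq{\rm lspec}_f(M)$; the reverse inclusions follow because $\delta_f$-conjugacy is symmetric, as solving $M=(TN+\delta_f(T))T^{-1}$ for $N$ and again using $\delta_f(T^{-1})=-T^{-1}\delta_f(T)T^{-1}$ shows $(N,M)$ is $\delta_f$-conjugate with respect to $T^{-1}$, so the same argument applies in reverse.

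The computations are routine derivation calculus; the only genuinely delicate point is bookkeeping the signs. Because the left and right eigenvalue equations carry $\delta_f$ with opposite signs, the Leibniz rule produces $+\delta_f(c)/c$ in one case and $-\delta_f(c)/c$ in the other, so I must invoke the $c\mapsto1/c$ symmetry of scalar conjugacy to conclude both directions from the single definition; similarly, establishing the symmetry of matrix $\delta_f$-conjugacy in part~2 is the step most likely to hide a sign error.
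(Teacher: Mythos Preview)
Your argument is correct and matches the paper's own justification: the paper proves part~1) exactly by rescaling eigenvectors (it carries out the left case explicitly after the proposition and the right case at equation~(\ref{dcd}), noting the sign flip just as you do), and it records the symmetry of $\delta_f$-conjugacy in the example immediately following Definition~\ref{dconj}, which is the ingredient you use to close part~2). The only point worth tightening is cosmetic: once you know $(N,M)$ is $\delta_f$-conjugate via $T^{-1}$, the reverse inclusions are literally the same computation with $T$ replaced by $T^{-1}$, so you need not rederive them.
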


	\begin{secexa}
		If $(M,N)$ is $\delta_f$-conjugate, then we have $N=(T^{-1}M+\delta_f (T^{-1}))T$, i.e. $(N,M)$ is also $\delta_f$-conjugate. \red
	\end{secexa}	
	\begin{secexa}
		If both $(L,M)$ and $(M,N)$ are $\delta_f$-conjugate with respect to regular $T,S\in \cK^{n\times n}$. Then, $(L,N)$ is also $\delta_f$-conjugate with respect to $TS$. \red
	\end{secexa}
\begin{secexa}
Consider system $\dot x =f(x)$ and its variational system $(d\delta x/dt) = (\partial f/\partial x) \delta x$. After an analytic diffeomorphic coordinate transformation $z=\varphi(x)$, we have $(d\delta z/dt) = (T(\partial f/\partial x)+\delta_f(T))T^{-1} \delta z$, where $T:=\partial \varphi/\partial x$. Proposition~\ref{lreig:p}~2) implies that $\partial f/\partial x$ and $(T(\partial f/\partial x)+\delta_f(T))T^{-1}$ have the same nonlinear left and right eigenvalues. \red
\end{secexa}

Proposition \ref{lreig:p} 1) comes from a scalar multiplication of eigenvectors. For some nonzero $a\in\cK$, left eigenvalue $\alpha$ and eigenvector $v$, we have
\begin{eqnarray*}
av^{\rm T}M+\delta_f(av^{\rm T})
&=&av^{\rm T}M+a\delta_f(v^{\rm T})+\delta_f(a)v^{\rm T}\\
&=&(\alpha+\delta_f(a)/a) av^{\rm T}.
\end{eqnarray*}
Then, $\alpha+\delta_f(a)/a$ and $av$ are also a left eigenvalue and eigenvector, respectively. Note that $(\alpha,\alpha+\delta_f(a)/a)$ is $\delta_f$-conjugate. For a similar relationship for right eigenvectors, see (\ref{dcd}) below.

\subsection{Main Theorem}
Here, we show that solutions to the DRE can be expressed as functions of nonlinear eigenvectors of the corresponding differential Hamiltonian matrix $\cH$.
\begin{secdefn}\label{HinvD}
A linear subspace $W \subset \cK^{2n}$ is said to be right $\cH$ invariant if $\cH W-\delta_f(W) \subset W$ holds. We denote the set of right eigenvalues of $\cH$ in $W$ by ${\rm rspec}_f (\cH|_W)$, i.e.,
\begin{eqnarray*}
&&{\rm rspec}_f (\cH|_W)\\
&&:=\{\beta\in\cK:\cH w-\delta_f(w)=\beta w, \ w\in W\setminus\{0\}\}.
\end{eqnarray*}
\end{secdefn}

The following is the main theorem of this paper.
\begin{secthm}\label{ham:t}
Assume there exists an $n$-dimensional $\cH$ invariant subspace $W\subset \cK^{2n}$.
Consider matrices $U,V\in\cK^{n\times n}$ such that
\begin{eqnarray}
W={\rm Im}\left[\begin{array}{c}U\\ V\end{array}\right].\label{Wdef}
\end{eqnarray}
If $U$ is regular,  $X:=VU^{-1}\in\cK^{n\times n}$ is a solution to (\ref{ric}) and satisfies 
\begin{eqnarray}
{\rm rspec}_f\left(A-RX\right)={\rm rspec}_f(\cH|_W)\label{cHinveig}.
\end{eqnarray}

Conversely, if $X\in\cK^{n\times n}$ is a solution to (\ref{ric}), there exist $U,V\in\cK^{n\times n}$ such that $U$ is regular, and $X=VU^{-1}$. Moreover, for these $U$ and $V$, subspace $W\subset\cK^{2n}$ in (\ref{Wdef}) is an $n$-dimensional $\cH$ invariant subspace and satisfies (\ref{cHinveig}).
\end{secthm}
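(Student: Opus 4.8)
The plan is to translate the geometric invariance of $W$ into a single matrix identity and read off both the DRE and the eigenvalue equality from it. Denote by $P$ the $2n\times n$ matrix in (\ref{Wdef}). Since $U$ is regular, $P$ has full column rank $n$, so its columns form a $\cK$-basis of $W$ and in particular $W$ is $n$-dimensional. Because $\delta_f$ is a derivation, the invariance $\cH W-\delta_f(W)\subset W$ evaluated on this basis is equivalent to the existence of a matrix $\Lambda\in\cK^{n\times n}$ with $\cH P-\delta_f(P)=P\Lambda$. Writing this out in the two $n\times n$ blocks and using $V=XU$ gives, from the top block, $(A-RX)U-\delta_f(U)=U\Lambda$, while the bottom block, after substituting the matrix product rule $\delta_f(V)=\delta_f(X)U+X\delta_f(U)$, reads $-QU-A^{\rm T}XU-\delta_f(X)U-X\delta_f(U)=XU\Lambda$.

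The key step is then to eliminate $\Lambda$: inserting $U\Lambda=(A-RX)U-\delta_f(U)$ from the top block into the bottom block cancels the $X\delta_f(U)$ terms, and right-multiplying by the regular $U^{-1}$ collapses the bottom block exactly to (\ref{ric}). For the eigenvalue claim (\ref{cHinveig}), I would rewrite the top block as $A-RX=(U\Lambda+\delta_f(U))U^{-1}$, which says precisely that $(A-RX,\Lambda)$ is $\delta_f$-conjugate with respect to $U$; Proposition \ref{lreig:p} 2) then gives ${\rm rspec}_f(A-RX)={\rm rspec}_f(\Lambda)$. It remains to identify ${\rm rspec}_f(\Lambda)$ with ${\rm rspec}_f(\cH|_W)$: writing any $w\in W$ as $w=P\xi$ and using $\cH P-\delta_f(P)=P\Lambda$ together with $\delta_f(P\xi)=\delta_f(P)\xi+P\delta_f(\xi)$, the right-eigenvector relation $\cH w-\delta_f(w)=\beta w$ becomes $P(\Lambda\xi-\delta_f(\xi)-\beta\xi)=0$, and injectivity of $P$ reduces this to $\Lambda\xi-\delta_f(\xi)=\beta\xi$. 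This equivalence runs in both directions, establishing the set equality.

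For the converse I would exhibit the witness $U=I$, $V=X$, which is regular with $X=VU^{-1}$. The corresponding subspace $W={\rm Im}\left[\begin{array}{c}I\\X\end{array}\right]$ is manifestly $n$-dimensional, and invariance reduces to checking $\cH\left[\begin{array}{c}I\\X\end{array}\right]-\delta_f\left[\begin{array}{c}I\\X\end{array}\right]=\left[\begin{array}{c}I\\X\end{array}\right]\Lambda$ with $\Lambda=A-RX$: using $\delta_f(I)=0$, the top block is the tautology $A-RX=\Lambda$, and the bottom block $-Q-A^{\rm T}X-\delta_f(X)=X(A-RX)$ is once more (\ref{ric}) rearranged. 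Since $U=I$ forces $\Lambda=A-RX$ outright, (\ref{cHinveig}) is immediate.

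I do not expect a genuine obstacle, since the argument is a structured computation rather than a deep one. The points that demand care are the non-linearity of $\delta_f$ — it satisfies a product rule rather than $\cK$-linearity, so $\delta_f(XU)$ and $\delta_f(P\xi)$ must be expanded correctly — and the justification that $\cH$-invariance of the subspace is equivalent to the matrix identity $\cH P-\delta_f(P)=P\Lambda$; the latter rests on the columns of $P$ forming a $\cK$-basis and on the derivation terms $\delta_f(a_i)w_i$ remaining inside $W$, which is exactly what makes the invariance condition basis-independent.
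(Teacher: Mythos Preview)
Your argument is correct and follows the same overall strategy as the paper: encode the invariance of $W$ by the matrix identity $\cH P-\delta_f(P)=P\Lambda$, read off the DRE from the bottom block, obtain the $\delta_f$-conjugacy $A-RX=(U\Lambda+\delta_f(U))U^{-1}$ from the top block, and for the converse take $U=I_n$, $V=X$, $\Lambda=A-RX$.

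The one place where you proceed differently is the identification ${\rm rspec}_f(\Lambda)={\rm rspec}_f(\cH|_W)$. The paper argues this by selecting a maximal family of right eigenvectors of $\cH$ inside $W$, completing it to a basis, obtaining a block upper-triangular representative $\hat\Lambda$, and then showing $\hat\Lambda$ and $\Lambda$ are $\delta_f$-conjugate via the change-of-basis matrix. Your route is more direct: with $w=P\xi$ and the derivation rule $\delta_f(P\xi)=\delta_f(P)\xi+P\delta_f(\xi)$, the eigenvector equation on $W$ collapses, by injectivity of $P$, to $\Lambda\xi-\delta_f(\xi)=\beta\xi$, giving the set equality immediately in both directions. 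This avoids the auxiliary block structure entirely and makes the argument uniform; the paper's version, on the other hand, makes visible the role of the actual eigenvectors of $\cH$ sitting inside $W$, which is closer in spirit to the classical ARE eigenvector method.
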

\begin{proof}
We prove the first parts. Since $W$ is $\cH$ invariant, there exists some matrix $\Lambda\in\cK^{n\times n}$ such that
\begin{eqnarray}
\left[\begin{array}{cc}
A & -R \\
-Q & -A^{\rm T}
\end{array}\right]
\left[\begin{array}{c}
U\\ V
\end{array}\right]
-\left[\begin{array}{c}
\delta_f(U)\\ \delta_f(V)
\end{array}\right]
=\left[\begin{array}{c}
U\\ V
\end{array}\right]
\Lambda.\label{riegh}
\end{eqnarray}
By multiplying $U^{-1}$ from the right, we have
\begin{eqnarray}
&&\left[\begin{array}{cc}
A & -R \\
-Q & -A^{\rm T}
\end{array}\right]
\left[\begin{array}{c}
I_n\\ VU^{-1}
\end{array}\right]
-\left[\begin{array}{c}
\delta_f(U)U^{-1}\\ \delta_f(V)U^{-1}
\end{array}\right]\nonumber\\
&&=\left[\begin{array}{c}
I_n\\ VU^{-1}
\end{array}\right]
U \Lambda U^{-1}.\label{riegh2}
\end{eqnarray}
Next, by multiplying $[VU^{-1} \ -I_n]$ from the left, we obtain
\begin{eqnarray*}
&&\delta_f(V)U^{-1}-VU^{-1}\delta_f(U)U^{-1}\\
&&+VU^{-1}A+A^{\rm T}VU^{-1}-VU^{-1}RVU^{-1}+Q=0.
\end{eqnarray*}
It can be shown that $\delta_f(VU^{-1})=\delta_f(V)U^{-1}-VU^{-1}\delta_f(U)U^{-1}$. Thus, $X:=VU^{-1}$ is a solution to (\ref{ric}).

Next, from the upper half of (\ref{riegh2}), 
\begin{eqnarray}
A-RX=(U \Lambda +\delta_f(U))U^{-1}. \label{clU}
\end{eqnarray}
From Proposition \ref{lreig:p} 2), 
\begin{eqnarray}
{\rm rspec}_f(\Lambda)&=&{\rm rspec}_f((U \Lambda +\delta_f(U))U^{-1})\nonumber\\
&=&{\rm rspec}_f\left(A-RX\right).\label{re1}
\end{eqnarray}

Let $r$ be the maximum number of linearly independent right eigenvectors $w_1,\dots,w_r\in W$ of $\cH$ associated with right eigenvalues $\beta_i$ $(i=1,\dots,r)$.
Since $W$ is an $n$-dimensional subspace, there exist $w_{r+1},\dots,w_n \in W$ such that ${\rm span}_{\cK}\{w_1,\dots,w_n\}=n$ holds.
Let
\begin{eqnarray*}
    \left[\begin{array}{c}
        \hat U \\ \hat V
    \end{array}\right]
    :=    
    \left[\begin{array}{ccc}
        w_1 & \cdots & w_n
    \end{array}\right].
\end{eqnarray*}
Then, there exists $\hat \Lambda \in \cK^{n\times n}$ such that
\begin{eqnarray*}
    &&\left[\begin{array}{cc}
        A & -R \\
        -Q & -A^{\rm T}
    \end{array}\right]
    \left[\begin{array}{c}
        \hat U\\ \hat V
    \end{array}\right]
    -\left[\begin{array}{c}
        \delta_f(\hat U)\\ \delta_f(\hat V)
    \end{array}\right]
    =\left[\begin{array}{c}
        \hat U\\ \hat V
    \end{array}\right]\hat \Lambda,\\
    &&\hspace{5mm} \hat\Lambda
    :=\left[\begin{array}{cc}
        B_{11}&B_{21}\\
        0&B_{22}
    \end{array}\right],
\end{eqnarray*}
where $B_{11}={\rm diag}\{\beta_1,\dots,\beta_r\}$ and $B_{21}\in \cK^{(n-r)\times r}$, $B_{22}\in \cK^{(n-r)\times (n-r)}$ are suitable matrices. Thus, 
\begin{eqnarray}
{\rm rspec}_f(\cH |_W)={\rm rspec}_f(B_{11})={\rm rspec}_f(\hat \Lambda).\label{eigeq2}
\end{eqnarray}
From (\ref{re1}) and (\ref{eigeq2}), it remains to show ${\rm rspec}_f(\Lambda )={\rm rspec}_f(\hat \Lambda)$.
Since both $[U^{\rm T} \  V^{\rm T}]^{\rm T}$ and $[\hat U^{\rm T} \ \hat V^{\rm T}]^{\rm T}$ consist of bases of $W$, there exists a regular matrix $T\in\cK^{n\times n}$ such that
\begin{eqnarray*}
    \left[\begin{array}{c}
        U \\ V
    \end{array}\right]
    =\left[\begin{array}{c}
        \hat U \\ \hat V
    \end{array}\right]T.
\end{eqnarray*}
By substituting this equality into (\ref{riegh}),  
\begin{eqnarray*}
    &&\cH
    \left[\begin{array}{c}
        \hat U\\ \hat V
    \end{array}\right]
    -\left[\begin{array}{c}
        \delta_f(\hat U)\\ \delta_f(\hat V)
    \end{array}\right] =\left[\begin{array}{c}
        \hat U\\ \hat V
    \end{array}\right] ( T \Lambda +\delta_f(T)) T^{-1},
\end{eqnarray*}
which implies $\hat \Lambda = ( T \Lambda + \delta_f(T)) T^{-1}$.
From Proposition \ref{lreig:p} 2), the set of right eigenvalues of $\hat \Lambda$ and $\Lambda$ are equivalent. 

We prove the second parts. Let $\Lambda:=A-RX$. By premultiplying $X$, we have, from (\ref{ric}),
\begin{eqnarray*}
X\Lambda=XA-XRX=-Q-\delta_f(X)-A^{\rm T}X.
\end{eqnarray*}
The above two equations yield
\begin{eqnarray}
\left[\begin{array}{cc}
A & -R \\
-Q & -A^{\rm T}
\end{array}\right]
\left[\begin{array}{c}
I_n\\ X
\end{array}\right]
-\left[\begin{array}{c}
\delta_f(I_n)\\ \delta_f(X)
\end{array}\right]
=\left[\begin{array}{c}
I_n\\ X
\end{array}\right]
\Lambda.\label{hmm}
\end{eqnarray}
Denote $U:=I_n$ and $V:=X$. Then, $U$ is regular, and $X=VU^{-1}$ holds.  Since $U$ is regular, $w_i\in\cK^{2n}$ $(i=1,\dots,n)$ defined by $[w_1,\dots,w_n]:=[U^{\rm T} \ V^{\rm T}]^{\rm T}$ spans $\cK^n$, and thus $W$ in (\ref{Wdef}) is an $n$-dimensional subspace. From (\ref{hmm}), $W$ is $\cH$ invariant. Finally, it can be shown that (\ref{cHinveig}) holds similarly to the proof of the first parts. 
\end{proof}
\begin{secrem}\label{bss:rem}
Solution $X$ does not depend on the choice of basis of $W$. Every basis of $W$ can be represented with regular matrix $T\in\cK^{n\times n}$ as 
\begin{eqnarray*}
\left[\begin{array}{c}
U \\ V
\end{array}\right]T
=\left[\begin{array}{c}
UT \\ VT
\end{array}\right].
\end{eqnarray*}
 Since $(VT)(UT)^{-1}=VU^{-1}=X$ holds, $X$ does not depend on the choice of basis of $W$. \red
\end{secrem}

As demonstrated in Example \ref{contra:ex}, a symmetric and positive definite solution $X$ to a DRE plays an important role in the contraction analysis. However, it is not guaranteed that $X=UV^{-1}$ has such a property for any $n$-dimensional $\cH$ invariant subspace $W$ in (\ref{Wdef}). In general, $X$ is a complex-valued function because nonlinear eigenvalues and eigenvectors of $\cH$, i.e., $U$ and $V$, can be complex-valued functions as in Example \ref{RLC:ex} below. In the next section, we give a characterization of $W$ defining a symmetric and positive definite solution $X$ to a DRE under an assumption for differential Hamiltonian matrix $\cH$.

Theorem \ref{ham:t} is an extension of the eigenvalue method for the ARE because Theorem \ref{ham:t} demonstrates that solutions to DRE (\ref{ric}) can be obtained by using the right eigenvectors of the corresponding differential Hamiltonian matrix $\cH$.
\begin{secexa}\label{RLC:ex}
Based on Example \ref{contra:ex}, consider a stabilization problem of an RL-circuit with a nonlinear inductor
\begin{eqnarray*}
    &&\hspace{-5mm}\left[\begin{array}{cc} 1 + x_1^2 & 0 \\ 0 & 1 \end{array}\right]\dot{x}=
    -\left[\begin{array}{cc} 1 & -1 \\ -1 & 1 \end{array}\right]x
    +\left[\begin{array}{c} 0\\ 1 \end{array}\right]u.
\end{eqnarray*}
Then, we have
\begin{eqnarray*}
&&\hspace{-5mm}f =  \left[\begin{array}{c} \frac{-x_1+x_2}{1+x_1^2} \\ x_1 - x_2 \end{array}\right], \ B =  \left[\begin{array}{c} 0\\ 1 \end{array}\right],\ R=BB^{\rm T} =  \left[\begin{array}{cc} 0 &0\\ 0&1 \end{array}\right],\\
&&\hspace{-5mm}A = \frac{\partial f}{\partial x} =  \left[\begin{array}{cc} -\frac{1 + 2 x_1 x_2 - x_1^2}{(1+x_1^2)^2} & \frac{1}{1+x_1^2} \\1 & -1 \end{array}\right].
\end{eqnarray*}
For positive definite $Q:={\rm diag} \{3+4 x_1^2+x_1^4,1\}$ for all $x\in\bR^2$, the differential Hamiltonian matrix is
\begin{eqnarray*}
{\cal H}=\left[\begin{array}{cccc}
 -\frac{1 + 2 x_1 x_2 - x_1^2}{(1+x_1^2)^2} & \frac{1}{1+x_1^2} & 0 & 0\\
 1 & -1 & 0 & -1\\
 -(3+4 x_1^2+x_1^4) & 0 & \frac{1 + 2 x_1 x_2- x_1^2}{(1+x_1^2)^2} & -1\\[1.5mm]
 0 & -1 & - \frac{1}{1+x_1^2} & 1
\end{array}\right].
\end{eqnarray*}
The right eigenvalues and eigenvectors of $\cH$ are
\begin{eqnarray*} 
    &&\hspace{-5mm}\beta^1:=-\frac{2+x_1^2}{1+x_1^2}, \ w^1=\left[\begin{array}{c}
        \frac{1}{1+x_1^2}\\
        -1\\
        1+x_1^2\\
        0
    \end{array}\right], \\
    &&\hspace{-5mm}\beta^2:=-2-2 x_1 x_2-x_1^2-x_1^4-\frac{(x_1-x_2)c(x_1,x_1)}{1+x_1^2},\\
    &&\hspace{-5mm}w^2=\left[\begin{array}{c}
       1 \\
       -1-x_1^2-(x_1+x_2) c(x_1,x_1)\\
       (1+x_1^2)(1+x_1^2-(x_1+x_2)c(x_1,x_1)) \\
       -(x_1+x_2)c(x_1,x_1)
    \end{array}\right],
\end{eqnarray*}
where 
\begin{eqnarray*}
&&\hspace{-5mm}c(x_1,x_2)\\
&&\hspace{-5mm}= \sum_{\{a: a^3 + 6 a - x_1^3 - 3 x_1 - 3 x_2=0\}}\textstyle\frac{(1+x_1^2) (a-x_1)^{\frac{a^2+3}{a^2+2}}}{\int_0^{x_1} (1+b^2)(a-b)^{\frac{a^2+3}{a^2+2}}db+1}.
\end{eqnarray*}
On the basis of Theorem~\ref{ham:t}, we define
\begin{eqnarray*}
    &&\hspace{-5mm}U:=\left[\begin{array}{cc}
        \frac{1}{1+x_1^2} & 1\\
        -1 & -1-x_1^2-(x_1+x_2)c(x_1,x_2)
    \end{array}\right],\\ 
   &&\hspace{-5mm}V:=\left[\begin{array}{cc}
        1+x_1^2 & (1+x_1^2)(1+x_1^2-(x_1+x_2)c(x_1,x_2))\\
        0 & -(x_1+x_2)c(x_1,x_2)
    \end{array}\right].
\end{eqnarray*}
Since two of the solutions to $a^3 + 6 a - x_1^3 - 3 x_1 - 3 x_2=0$ are complex-valued functions, $U$ and $V$ are complex-valued functions. 
Then, a solution to DRE (\ref{conlya}) is
\begin{eqnarray*}
    X := VU^{-1} = \left[\begin{array}{cc}
        2 (1+x_1^2)^2 & 1+x_1^2\\
        1+x_1^2 & 1
    \end{array}\right],
\end{eqnarray*}
and $(\partial X_{ij}/\partial x)B=0$. Moreover, $X$ is positive definite for all $x\in\bR^2$ while $U$ and $V$ are complex-valued functions. According to Example~\ref{contra:ex}, the feedback controller
\begin{eqnarray*}
	u =- \int_0^x B^{\rm T} X dx= -\left(x_1 + x_1^3/3 + x_2\right)
\end{eqnarray*}
makes the closed loop system globally incrementally asymptotically stable. 
Fig. \ref{fig:sim} shows a phase portrait of the closed-loop system.
\begin{figure}[tb]
	\begin{center}
\vspace{2mm}\includegraphics[height=4cm]{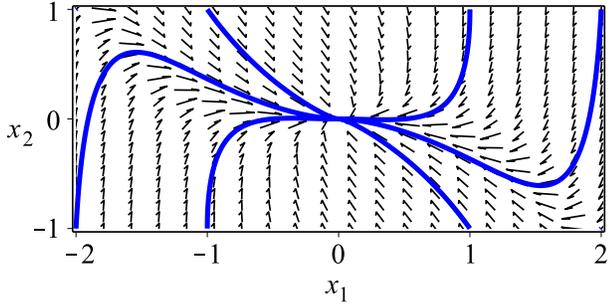}\vspace{-2mm}
		\caption{Phase portrait of the closed-loop system}
		\label{fig:sim}
	\end{center}
\end{figure}
\red
\end{secexa}
\section{Detailed Properties in Simple Case}\label{Dp:s}
According to Example \ref{contra:ex} and \cite{MS:13}, a stabilizing solution to the DRE is real symmetric and positive (semi)definite. In the linear case, real symmetricity, regularity, and positive (semi)definiteness depend on a choice of $n$-eigenvectors of the Hamiltonian matrix, i.e., an $n$-dimensional $\cH$-invariant subspace. Here, we study relationships between properties of solutions to the DRE and nonlinear eigenvalues and eigenvectors of the differential Hamiltonian matrix. As a first step, in this paper, we assume that the differential Hamiltonian matrix is simple. 
\begin{secdefn}\label{simp:d}
A matrix $M\in\cK^{n\times n}$ is said to be left (or right) simple if there exist $n$ left (or right) eigenvectors $v_1,\dots,v_n\in\cK^n$ such that ${\rm span}_{\cK}\{v_1,\dots,v_n\}=\cK^n$.
\end{secdefn}

Note that, for any right eigenvector $w\in\cK^{2n}$ of $\cH$, $\{w\} \subset \cK^{2n}$ is a one-dimensional $\cH$-invariant subspace. Therefore, simplicity of $\cH$ implies the existence of the $2n$-dimensional $\cH$-invariant subspace.

It can readily be shown that a matrix $M$ is left (or right) simple if and only if $M$ is $\delta_f$-conjugate to a diagonal matrix, which yields the following proposition.
\begin{secprop}\label{simp:p}
A matrix $M\in\cK^{n\times n}$ is right simple if and only if it is left simple.\red
\end{secprop}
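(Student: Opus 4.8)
The plan is to establish the intermediate characterization stated just before the proposition, namely that for $M\in\cK^{n\times n}$ being right simple is equivalent to $M$ being $\delta_f$-conjugate to a diagonal matrix, and likewise that being left simple is equivalent to the same condition; the proposition then follows immediately, since both one-sided notions collapse to the identical statement. The only tools needed beyond bookkeeping are the product rule for $\delta_f$ on matrix inverses (already used in the proof of Theorem~\ref{ham:t}) and the symmetry of the $\delta_f$-conjugacy relation established in the earlier Example.

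First I would handle the right case. Suppose $M$ is right simple with right eigenvectors $w_1,\dots,w_n$ spanning $\cK^n$ and associated right eigenvalues $\beta_1,\dots,\beta_n$. Collecting the eigenvectors as the columns of the regular matrix $T:=[w_1 \ \cdots \ w_n]$, the $n$ defining identities $Mw_i-\delta_f(w_i)=\beta_i w_i$ assemble into $MT-\delta_f(T)=TD$ with $D:={\rm diag}\{\beta_1,\dots,\beta_n\}$. Rearranging gives $M=(TD+\delta_f(T))T^{-1}$, i.e. $(M,D)$ is $\delta_f$-conjugate with respect to $T$. Conversely, if $M=(TD+\delta_f(T))T^{-1}$ for some regular $T$ and diagonal $D$, then reading $MT-\delta_f(T)=TD$ column by column, using $\delta_f(Te_i)=\delta_f(T)e_i$ since $e_i$ is constant, shows the columns of $T$ are right eigenvectors of $M$; they form a basis because $T$ is regular, so $M$ is right simple.

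Next I would treat the left case, where the bookkeeping is transposed. If $M$ is left simple with left eigenvectors $v_1,\dots,v_n$ and left eigenvalues $\alpha_1,\dots,\alpha_n$, I would stack the row vectors $v_i^{\rm T}$ as the rows of a regular matrix $S$. The identities $v_i^{\rm T}M+\delta_f(v_i)^{\rm T}=\alpha_i v_i^{\rm T}$ assemble into $SM+\delta_f(S)=D'S$ with $D':={\rm diag}\{\alpha_1,\dots,\alpha_n\}$, which rearranges to $D'=(SM+\delta_f(S))S^{-1}$; that is, $(D',M)$ is $\delta_f$-conjugate. Invoking the symmetry of $\delta_f$-conjugacy (if $(D',M)$ is $\delta_f$-conjugate then so is $(M,D')$), this again says $M$ is $\delta_f$-conjugate to a diagonal matrix. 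The converse reads $D'S=SM+\delta_f(S)$ row by row to recover the left eigenvectors as the rows of $S$. Combining the two characterizations, $M$ is right simple iff it is $\delta_f$-conjugate to a diagonal matrix iff it is left simple, which is the claim.

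The computation is essentially mechanical once the eigenvector equations are packaged into matrix form, so I do not expect a deep obstacle. The one place demanding care is the asymmetry between the two cases: right eigenvectors become the columns of the conjugating matrix, so that $M$ itself is conjugated to the diagonal matrix, whereas left eigenvectors become its rows, so that the diagonal matrix is conjugated to $M$. Reconciling these requires explicitly appealing to the symmetry of the $\delta_f$-conjugacy relation, and one must also confirm that the rearrangements into the form $(\,\cdot\, D+\delta_f(\,\cdot\,))(\,\cdot\,)^{-1}$ are valid over the differential field $\cK$, which rests only on regularity of $T$ and $S$ and the differential product rule.
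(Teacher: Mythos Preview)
Your proposal is correct and follows exactly the route the paper indicates: the paper merely asserts, without writing out the details, that $M$ is left (or right) simple if and only if $M$ is $\delta_f$-conjugate to a diagonal matrix, and declares Proposition~\ref{simp:p} an immediate consequence. Your argument supplies precisely those omitted details---packaging the eigenvector equations as $MT-\delta_f(T)=TD$ (columns) and $SM+\delta_f(S)=D'S$ (rows), and invoking the symmetry of $\delta_f$-conjugacy to align the two---so there is nothing to compare; your proof and the paper's sketch are the same argument.
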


Since left and right simplicity are equivalent properties, a left or right simple matrix is called simple. Also, its left or right eigenvalue is called an eigenvalue.

If the differential Hamiltonian matrix $\cH$ in (\ref{HM}) is simple, it is possible to show the following. 
\begin{secthm}\label{Xprop:thm}
Let $\cH$ be simple. Let $W\subset \cK^{2n}$ be an $n$-dimensional $\cH$ invariant subspace.
\begin{enumerate}
\item\label{digLm} There exist $U,V\in\cK^{n\times n}$ in (\ref{Wdef}) and $\lambda_i$ $(i=1,\dots,n)$ such that $\Lambda:={\rm diag}\{\lambda_1,\dots,\lambda_n\}$ holds in (\ref{riegh}).
\item\label{dcLm} Denote $\lambda^{\delta_f}$ as the set of $\delta_f$-conjugate elements of $\lambda\in\cK$. Also, define for $\lambda_i$ $(i=1,\dots,n)$ in \ref{digLm}),
\begin{eqnarray}
	\{\lambda_1,\dots,\lambda_n\}^{\delta_f}:=\{\lambda\in \lambda_i^{\delta_f}:i=1,\dots,n\}.\label{lamdelt}
\end{eqnarray}
Then, $\{\lambda_1,\dots,\lambda_n\}^\delta_f={\rm rspec}_f (\cH|_W)$.
\item\label{feig} $-\lambda_i, \lambda_i^*, -\lambda_i^* \in {\rm rspec}_f (\cH|_W)$ $(i=1,\dots,n)$, where $\lambda_i^*$ is the complex conjugate of $\lambda_i$.  
\item\label{heluv} If $\cH$ has no eigenvalue on the imaginary axis, then there is at least one $W$ such that $U^*V$ is Hermitian, and $U^{\rm T}V$ is symmetric for $U,V\in\cK^{n\times n}$ in (\ref{Wdef}).
\item\label{regu} Suppose that $U^*V$ is Hermitian or $U^{\rm T}V$ is symmetric for $U,V\in\cK^{n\times n}$ in (\ref{Wdef}). Then, $U$ is regular if and only if there is no $\lambda_i$ $(i=1,\dots,n)$ in \ref{digLm}) satisfying 
\begin{eqnarray}
A^{\rm T}v+\delta_f(v)=-\lambda_i v, \ Rv=0 \label{VREg}
\end{eqnarray}
for non-zero $v\in\cK^n$ such that $[0^{\rm T},v^{\rm T}]^{\rm T}\in W$. Moreover, $V$ is regular if and only if there is no $\lambda_i$ $(i=1,\dots,n)$ in \ref{digLm}) satisfying  
\begin{eqnarray}
A u-\delta_f(u)=\lambda_i u, \ Q u=0 \label{UVR}
\end{eqnarray}
for non-zero $u\in\cK^n$ such that $[u^{\rm T} \ 0^{\rm T}]^{\rm T}\in W$.
\item\label{stab} Suppose that $U,V$, and $\Lambda$ are chosen as in \ref{digLm}). Denote the real and imaginary parts of $\lambda_i(x,t)$ by ${\rm Re}(\lambda_i)$ and ${\rm Im}(\lambda_i)$, respectively. Suppose that $U,U^{-1}$, and $V$ are defined in $\bR^n\times \bR$. Suppose that there is symmetric and positive semidefinite $\bar Q\in\bR^{n\times n}$ such that $Q\ge \bar Q$ for all $(x,t)\in\bR^n\times\bR$. If for some $c<0$, ${\rm Re}(\lambda_i)\le c$ $(i=1,\dots,n)$ for all $(x,t)\in \bR^n\times \bR$ then $X:=VU^{-1}$ is symmetric and positive semidefinite for all $(x,t)\in \bR^n\times \bR$.\red
\end{enumerate}
\end{secthm}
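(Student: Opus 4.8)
The plan is to read the spectral condition $\mathrm{Re}(\lambda_i)\le c<0$ as contraction of the closed-loop variational system and to run a Lyapunov argument along trajectories. First I would observe that $\mathrm{Re}(\lambda_i)\le c<0$ places every $\lambda_i$ strictly in the open left half plane, so by part~\ref{feig}) the companion eigenvalues $-\lambda_i,-\lambda_i^*$ lie strictly in the open right half plane; in particular $\cH$ has no eigenvalue on the imaginary axis and $W$ is the (unique) stable $n$-dimensional $\cH$ invariant subspace. Part~\ref{heluv}) then applies to this $W$, giving that $U^{\rm T}V$ is symmetric. Because $U,V$ are real (defined on $\bR^n\times\bR$), the identity $U^{\rm T}XU=U^{\rm T}V$ forces $X=VU^{-1}$ to be real symmetric, which settles symmetricity.

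For positive semidefiniteness I would set $A_{cl}:=A-RX$ and rewrite DRE~(\ref{ric}) by completing the closed-loop terms into the differential Lyapunov identity $\delta_f(X)+XA_{cl}+A_{cl}^{\rm T}X=-(Q+XRX)$. Reading this along a solution $x(t)$ of $\dot x=f(x,t)$ and testing it on a variational trajectory $\delta x(t)=\Phi(t,t_0)y_0$, where $\Phi$ is the real transition matrix of $\frac{d}{dt}\delta x=A_{cl}\delta x$ and $y_0\in\bR^n$, the scalar ${\cal V}(t):=\delta x(t)^{\rm T}X(x(t),t)\delta x(t)$ obeys $\dot{\cal V}=-\delta x^{\rm T}(Q+XRX)\delta x\le0$, since $Q\ge\bar Q\ge0$ and $XRX\ge0$ (for symmetric $X$ and $R\ge0$, the relevant control setting). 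Hence ${\cal V}$ is nonincreasing, and the proof proceeds by contradiction: if $y_0^{\rm T}X(x_0,t_0)y_0<0$ for some real $y_0$, then ${\cal V}(t)\le{\cal V}(t_0)<0$ for all $t\ge t_0$, so it suffices to prove ${\cal V}(t)\to0$.

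To estimate ${\cal V}$ I would diagonalize via $\delta x=U\delta z$: part~\ref{digLm}) gives $\dot{\delta z}=\Lambda\delta z$ with $\Lambda=\mathrm{diag}\{\lambda_i\}$, so $\Phi(t,t_0)=U(t)D(t,t_0)U(t_0)^{-1}$ with $D=\mathrm{diag}\{\exp\int_{t_0}^t\lambda_i\}$. Substituting and using $U^{\rm T}XU=U^{\rm T}V$ yields ${\cal V}(t)=y_0^{\rm T}U(t_0)^{-{\rm T}}D(U^{\rm T}V)(x(t),t)DU(t_0)^{-1}y_0$, whose entries carry the factor $(U^{\rm T}V)_{ij}\exp(\int_{t_0}^t(\lambda_i+\lambda_j))$, bounded in modulus by $e^{2c(t-t_0)}|(U^{\rm T}V)_{ij}(x(t),t)|$. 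As $e^{2c(t-t_0)}\to0$, this gives ${\cal V}(t)\to0$ and the desired contradiction, so $X\ge0$.

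The hard part is precisely this last estimate. The factor $e^{2c(t-t_0)}\to0$ defeats the growth of $U^{\rm T}V$ only if $\|U^{\rm T}V\|$ does not grow faster than $e^{2|c|(t-t_0)}$ along the trajectory. Global definedness of $U,U^{-1},V$ on $\bR^n\times\bR$ rules out finite-time blow-up but not growth at infinity; what is genuinely needed is a uniform bound on $U^{\rm T}V$ (equivalently on the metric $X$ and on $U$) along $x(t)$. I expect this to be where the hypotheses that $U,U^{-1},V$ are globally defined and that $\bar Q$ is a constant lower bound enter: the auxiliary equation $\delta_f(U^{\rm T}V)+\Lambda(U^{\rm T}V)+(U^{\rm T}V)\Lambda=-(U^{\rm T}QU+V^{\rm T}RV)$ must be used to select the bounded (stable) branch of $U^{\rm T}V$, for instance because trajectories remain in a region on which the real-analytic data are bounded. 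Making this growth control rigorous is the delicate step; granting it, the single contraction estimate above yields symmetry and positive semidefiniteness together.
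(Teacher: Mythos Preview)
Your core strategy coincides with the paper's: both integrate a Lyapunov-type identity along trajectories of the diagonalized variational system $\dot{\delta z}=\Lambda\,\delta z$, using ${\rm Re}(\lambda_i)\le c<0$ for decay of $\delta z$. The paper, however, skips your detour through $X$ and $A_{cl}=A-RX$: it multiplies the two block rows of the invariance relation~(\ref{riegh}) on the left by $V^*$ and $U^*$ and combines them to obtain directly
\[
\delta_f(V^*U)+V^*U\Lambda+\Lambda^*V^*U=-V^*RV-U^*QU,
\]
which is precisely the ``auxiliary equation'' you only reach in your final paragraph. Testing this on $\delta z(t)$ and integrating from $t_0$ to $\infty$ gives $\delta z_0^{\,*}V^*U(x_0,t_0)\delta z_0\ge 0$, and positive semidefiniteness (hence also symmetry) of $X=(U^{-1})^*(U^*V)U^{-1}$ follows at once. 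Your contradiction route through ${\cal V}=\delta x^{\rm T}X\delta x$ is equivalent after the change to $\delta z$ coordinates, just longer.

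Two further remarks. Invoking part~\ref{heluv}) for symmetry is not quite legitimate as written: that item asserts only the \emph{existence} of some $W$ with $U^{\rm T}V$ symmetric, not that the given stable $W$ enjoys it; what actually delivers this for the stable subspace is Proposition~\ref{rsym:p} together with Proposition~\ref{Himg1:prop}, which you should cite directly. And the growth issue you honestly flag---whether $e^{2c(t-t_0)}$ beats the growth of $U^{\rm T}V$ along the trajectory---is exactly the same implicit step in the paper's argument (it needs $\delta z^*(t)V^*U(\phi(x_0,t),t)\delta z(t)\to 0$ as $t\to\infty$), and the paper does not argue this boundedness further either; so on that point your proposal is no less complete than the published proof.
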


Theorem \ref{Xprop:thm} \ref{heluv}) and \ref{regu}) give characterizations of symmetricity and regularity of a solution to the DRE. Denote $\Omega:=U^*V$ and $\tilde \Omega:=U^{\rm T}V$. If $U$ is regular, we have
\begin{eqnarray}
X=VU^{-1}=(U^{-1})^*\Omega U^{-1}=(U^{-1})^{\rm T}\tilde{\Omega} U^{-1}. \label{XVUtr}
\end{eqnarray}
Thus, $X$ is real symmetric if both $\Omega=\Omega^*$ and $\tilde \Omega = \tilde \Omega^{\rm T}$ hold and $U$ is regular. Regularity of $U$ is characterized by (\ref{VREg}).

Conditions (\ref{VREg}) and (\ref{UVR}) can be viewed as generalizations of Popov-Belevitch-Hautus (PBH) accessibility and observability tests on nonlinear systems, respectively.  In fact, there is no $\lambda$ such that (\ref{VREg}) and (\ref{UVR}) hold if $\dot x = f(x) + B u$ is locally strongly accessible \cite{NJ:90} and if $\dot x = f(x)$, $y=h(x)$ is locally observable \cite{NJ:90}, when $A=\partial f/\partial x$, $R=BB^{\rm T}$, and $Q=\partial h/\partial x$ \cite{PBHKO}. Thus, if these two systems are accessible and observable as in Example \ref{RLC:ex}, and if $\cH$ has no eigenvalue on the imaginary axis, then the DRE has at least one real symmetric and regular solution. Moreover, if the condition in Theorem~\ref{Xprop:thm}~\ref{stab}) holds, one of the real symmetric solutions is positive definite. 

The remainder is dedicated to the proof of Theorem~\ref{Xprop:thm}.
\subsection{Proofs of \ref{digLm}) and \ref{dcLm})}
Although the number of linearly independent right eigenvectors of differential Hamiltonian matrix $\cH$ is at most $2n$, the number of eigenvalues can be infinite, which is different from the eigenvalues in linear algebra. Consider the right eigenvalue $\lambda \in \cK$ and its associated right eigenvector $w \in  \cK^{2n}\setminus\{0\}$ of $\cH$.
For $a\in{\mathcal K}\setminus\{0\}$, from the definition of the right eigenvalue and eigenvector, we have
\begin{eqnarray}
\cH aw-\delta_f(aw)&=&\cH aw-a\delta_f(w)-\delta_f(a)w\nonumber\\
&=& (\lambda-\delta_f(a)a^{-1}) a w\label{dcd}
\end{eqnarray}
Thus, $\lambda-\delta_f(a)a^{-1}$ and $a w$ are also right eigenvalue and eigenvector, respectively. These $\lambda$ and $\lambda-\delta_f(a)a^{-1}$ are $\delta_f$-conjugate. 

Consider differential Hamiltonian matrix $\cH$. An $n$-dimensional $\cH$ invariant subspace $W\subset \cK^{2n}$ can always be generated by linearly independent $n$ right eigenvectors, which is demonstrated here. Let $W$ be generated by $w_1,\dots,w_n$, and let the column elements of $\hat W_2:=[\hat w_{n+1},\dots,\hat w_{2n}]$ be $n$ right eigenvectors associated with eigenvalues $\lambda_i$ $(i=n+1,\dots, 2n)$ such that 
\begin{eqnarray*}
	{\rm span}_{\cK}\{w_1,\dots,w_n,\hat w_{n+1},\dots,\hat w_{2n}\}=\cK^{2n}.
\end{eqnarray*}
Such $\hat W_2$ always exists because of the simplicity of $\cH$. From the definitions of the $n$-dimensional $\cH$ invariant subspace and the right eigenvalue and eigenvector, we have
\begin{eqnarray}
&&\cH \left[\begin{array}{cc} W&\hat{W}_2\end{array}\right]-\left[\begin{array}{cc} \delta_f(W) & \delta_f(\hat{W}_2)\end{array}\right]\nonumber\\
&&=\left[\begin{array}{cc} W & \hat{W}_2\end{array}\right]\left[\begin{array}{cc} A_{11} & 0 \\ 0 & A_{22}\end{array}\right],\label{ch2n}
\end{eqnarray}
where $A_{11}\in\cK^{n\times n}$ is a suitable matrix, and $A_{22}={\rm diag}\{ \lambda_{n+1},\dots,\lambda_{2n}\}$; consequently
\begin{eqnarray*}
	&&\left[\begin{array}{cc} W & \hat{W}_2 \end{array}\right]^{-1}\left(\cH \left[\begin{array}{cc} W&\hat{W}_2\end{array}\right]-\left[\begin{array}{cc} \delta_f(W) &\delta_f(\hat{W}_2)\end{array}\right]\right)\\
	&&=\left[\begin{array}{cc} A_{11} & 0 \\ 0 & A_{22}\end{array}\right].
\end{eqnarray*}
Since $\cH$ is simple, $A_{11}$ is also simple. Let column elements of $\hat W_1:=[\hat w_1,\dots,\hat w_n]$ be linearly independent $n$ right eigenvectors of $A_{11}$ associated with eigenvalues $\lambda_i$ $(i=1,\dots, n)$. Also, denote $\Lambda :={\rm diag}\{ \lambda_1,\dots,\lambda_n\}$. From the definition of right eigenvalues and eigenvectors, we obtain $A_{11} \hat W_1- \delta_f (\hat W_1)=\hat W_1 \Lambda$. From this equality and (\ref{ch2n}),
\begin{eqnarray*}
	\cH W \hat W_1 - \delta_f(W) \hat W_1 = W A_{11} \hat W_1 = W (\hat W_1 \Lambda + \delta_f (\hat W_1)),
\end{eqnarray*}
and thus $\cH W \hat W_1- \delta_f(W\hat W_1) = W \hat W_1 \Lambda$. Because of $\Lambda ={\rm diag}\{ \lambda_1,\dots,\lambda_n\}$, all column elements of regular matrix $W \hat W_1$ are right eigenvectors of $\cH$. In summary, an $n$-dimensional $\cH$ invariant subspace can always be generated by linearly independent $n$ right eigenvectors if $\cH$ is simple, which implies that $\{\lambda_1,\dots,\lambda_n\}^\delta_f={\rm rspec}_f (\cH|_W)$ holds. Therefore, for simple $\cH$, the set ${\rm rspec}_f (\cH|_W)$ is obtained by finding $n$ linearly independent right eigenvectors in $W$ while the number of elements in ${\rm rspec}_f (\cH|_W)$ can be infinite. Note that a solution to the DRE is uniquely determined irrespective of the choice of eigenvalues in ${\rm rspec}_f (\cH|_W)$. Let $\hat \lambda_i$ be $\delta_f$-conjugate to $\lambda_i$ $(i=1,\dots,n)$. Then, there exists $a_i$ such that $\hat \lambda_i = \lambda_i- \delta_f (a_i)/a_i$. Define $\hat \Lambda:={\rm diag}\{ \hat \lambda_1,\dots,\hat \lambda_n\}$ and $\hat A={\rm diag}\{ a_1,\dots, a_n\}$.
In a similar manner to the discussion in (\ref{dcd}), $\cH W \hat W_1 \hat A - \delta_f(W\hat W_1 \hat A)  = W \hat W_1 \hat A \hat \Lambda$. Owing to Remark~\ref{bss:rem}, $W \hat W_1$ and $W \hat W_1 \hat A$ give the same solution $X$.  
\subsection{Proof of \ref{feig})}
Owing to the specific structure of $\cH$, we have the following relationship between the left and right eigenvalues of $\cH$, where $\cH$ does not need to be simple. . 
\begin{secprop}\label{eigch:p}
$\beta\in\cK$ is a right eigenvalue of $\cH$ if and only if $-\beta$ is its left eigenvalue, or equivalently, if and only if $-\beta^*$ is its left eigenvalue, or equivalently, if and only if $\beta^*$ is its right eigenvalue.
\end{secprop}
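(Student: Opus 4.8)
The plan is to exploit the symplectic structure of $\cH$, exactly as the Hamiltonian structure is used in the classical linear eigenvalue method. Introduce the constant matrix $J := \left[\begin{array}{cc} 0 & I_n \\ -I_n & 0\end{array}\right] \in \bR^{2n\times 2n}$. Using only $R=R^{\rm T}$ and $Q=Q^{\rm T}$, a direct block computation shows that $J\cH$ is symmetric, equivalently that $\cH^{\rm T}J = -J\cH$. This single identity is the algebraic engine of the proof. Two elementary commutation facts are also needed: since $J$ has constant entries and $\delta_f$ is linear and annihilates constants, $\delta_f(Jw)=J\delta_f(w)$; and since $A$, $R$, $Q$ are real-valued (so $\cH^*=\cH$) while $f$ is real-valued, $\delta_f$ commutes with complex conjugation, i.e. $\delta_f(w)^*=\delta_f(w^*)$.

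First I would prove that $\beta$ is a right eigenvalue of $\cH$ if and only if $-\beta$ is a left eigenvalue. Starting from a right eigenvector equation $\cH w-\delta_f(w)=\beta w$, multiply on the left by $J$ and set $u:=Jw$. Using $\delta_f(Jw)=J\delta_f(w)$ together with $J\cH=-\cH^{\rm T}J$, the equation rearranges to $\cH^{\rm T}u+\delta_f(u)=-\beta u$, which upon transposing is precisely the defining relation for $u$ being a left eigenvector of $\cH$ with left eigenvalue $-\beta$. Since $J$ is invertible, the step is reversible, giving the stated equivalence.

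Next I would dispatch the conjugate assertions. Conjugating $\cH w-\delta_f(w)=\beta w$ and invoking $\cH^*=\cH$ together with $\delta_f(w)^*=\delta_f(w^*)$ yields $\cH w^*-\delta_f(w^*)=\beta^* w^*$, so $\beta$ is a right eigenvalue if and only if $\beta^*$ is. Chaining this with the first equivalence, applied once at $\beta$ and once at $\beta^*$, links all four conditions: $\beta$ right $\Leftrightarrow$ $-\beta$ left $\Leftrightarrow$ $\beta^*$ right $\Leftrightarrow$ $-\beta^*$ left, which is exactly the claim.

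The proof is short because the symplectic identity does almost all of the work; the only point requiring care is the sign bookkeeping around $\delta_f$. The right- and left-eigenvalue conditions differ in the sign of the $\delta_f$ term (the transposed left condition carries $+\delta_f(u)$ whereas the right condition carries $-\delta_f(w)$), while the eigenvalue sign flips to $-\beta$ through $\cH^{\rm T}J=-J\cH$. I therefore expect the main obstacle to be verifying cleanly that $J$ passes through $\delta_f$ without generating spurious terms, so that $-\delta_f(w)$ becomes exactly $+\delta_f(u)$ on the opposite side and the eigenvalue emerges as $-\beta$ rather than $+\beta$.
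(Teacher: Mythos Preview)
Your proposal is correct and essentially identical to the paper's proof: the paper also introduces $J$, uses the identity $J^{-1}\cH=-\cH^{\rm T}J^{-1}$ (equivalent to your $\cH^{\rm T}J=-J\cH$ since $J^{-1}=-J$) to pass from the right eigenvector relation to the left one with eigenvalue $-\beta$, and then obtains the conjugate statements from the real-valuedness of $\cH$ and $f$. The only cosmetic difference is that the paper reaches $-\beta^*$ left first and then deduces $\beta^*$ right, whereas you establish $\beta^*$ right directly and chain afterward.
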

\begin{proof}
First, we show that if $\beta\in\cK$ is a right eigenvalue, $-\beta$ is a left eigenvalue. Let $w\in\cK^{2n}$ be a right eigenvector associated with right eigenvalue $\beta$, i.e., $w$ and $\beta$ satisfy
\begin{eqnarray}
\cH w- \delta_f(w)=\beta w.\label{chreig}
\end{eqnarray}
For matrix $J\in\cK^{2n\times 2n}$,
\begin{eqnarray}
J:=\left[\begin{array}{cc}
0&I_n\\-I_n&0
\end{array}\right]\label{J},
\end{eqnarray}
we have $J^{-1}\cH =-\cH^{\rm T}J^{-1}$. By premultiplying $J^{-1}$ with both sides of (\ref{chreig}), we have
\begin{eqnarray*}
-\cH^{\rm T} J^{-1}w-\delta_f(J^{-1}w)=\beta J^{-1}w.
\end{eqnarray*}
Therefore, $-\beta$ is a left eigenvalue of $\cH$ with left eigenvector $J^{-1}w$, and the converse can readily be shown.

Since $\cH$ is real analytic, by taking the conjugate transpose instead of the transpose in the above equations, we can show that $\beta\in\cK$ is a right eigenvalue if and only if $-\beta^*$ is a left eigenvalue.
Finally, from the above proof, $\hat \beta:=-\beta^*$ is a left eigenvalue if and only if $-\hat \beta:=\beta^*$ is a right eigenvalue.
\end{proof}

Now, we are ready to prove \ref{feig}).
\begin{proof}[Proof of \ref{feig})]
Let $\cH\in\cK^{2n\times 2n}$ be simple, and let $\lambda\in\cK$ be its eigenvalue. From Proposition~\ref{eigch:p}, $-\lambda$, $-\lambda^*$, and $\lambda^*$ are also eigenvalues.
\end{proof}
\subsection{Proof of \ref{heluv})}
Let $\omega_{i,j}$ and $\tilde \omega_{i,j}$ be the $(i,j)$ elements of $\Omega:=U^*V$ and $\tilde \Omega:=U^{\rm T}V$, respectively, i.e., 
\begin{eqnarray}
\omega_{i,j}:=u_i^*v_j,\ \tilde \omega_{i,j}:=u_i^{\rm T}v_j, \ i,j=1,2,\dots,n,\label{omgijdef}
\end{eqnarray}
Conditions $\Omega=\Omega^*$ and $\tilde \Omega=\tilde \Omega^{\rm T}$ can be rewritten as
\begin{eqnarray}
&&\hspace{-10mm}\omega_{i,j}-\omega_{j,i}^*=u_i^*v_j-v_i^*u_j=0, \ i,j=1,2,\dots,n,\label{elome}\\
&&\hspace{-10mm}\tilde{\omega}_{i,j}-\tilde{\omega}_{j,i}^{\rm T}=u_i^{\rm T}v_j-v_i^{\rm T}u_j=0, \ i,j=1,2,\dots,n.\label{tpome}
\end{eqnarray}
These conditions are characterized by eigenvalues of $\cH$.
\begin{secprop}\label{rsym:p}
Let $w_i=[u_i^{\rm T} \ v_i^{\rm T}]^{\rm T}$ and $w_j=[u_j^{\rm T} \ v_j^{\rm T}]^{\rm T}\in\cK^{2n}$ be right eigenvectors associated with right eigenvalues $\lambda_i$ and $\lambda_j\in\cK$ of $\cH$, respectively.
If $\lambda_i^*$ and $-\lambda_j$ $(i,j=1,\dots,n)$ are not $\delta_f$-conjugate, (\ref{elome}) holds.
Also, if $\lambda_i$ and $-\lambda_j$ $(i,j=1,\dots,n)$ are not $\delta_f$-conjugate, (\ref{tpome}) holds.
\end{secprop}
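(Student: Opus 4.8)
The plan is to package both conditions (\ref{elome}) and (\ref{tpome}) into a single scalar bilinear form built from the matrix $J$ in (\ref{J}), and then to show that this scalar satisfies a first-order $\delta_f$-equation that can be nonzero only when the excluded $\delta_f$-conjugacy occurs. For right eigenvectors $w_i=[u_i^{\rm T}\ v_i^{\rm T}]^{\rm T}$ and $w_j=[u_j^{\rm T}\ v_j^{\rm T}]^{\rm T}$, I would introduce $s:=w_i^{\rm T}Jw_j$ and $\tilde s:=w_i^{*}Jw_j$. A direct block multiplication with $J$ from (\ref{J}) gives $s=u_i^{\rm T}v_j-v_i^{\rm T}u_j$ and $\tilde s=u_i^{*}v_j-v_i^{*}u_j$, so that $s=0$ is exactly (\ref{tpome}) and $\tilde s=0$ is exactly (\ref{elome}). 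Hence it suffices to derive a differential equation for each of $s$ and $\tilde s$. Throughout I will use the algebraic identity $\cH^{\rm T}J=-J\cH$, which follows from the relation $J^{-1}\cH=-\cH^{\rm T}J^{-1}$ used in the proof of Proposition~\ref{eigch:p} together with $J^{-1}=-J$.

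For the symmetric case, since $\delta_f$ is a derivation and $J$ is constant ($\delta_f(J)=0$), I obtain $\delta_f(s)=\delta_f(w_i)^{\rm T}Jw_j+w_i^{\rm T}J\delta_f(w_j)$. Substituting the eigenvector relations in the form $\delta_f(w_i)=\cH w_i-\lambda_i w_i$ and $\delta_f(w_j)=\cH w_j-\lambda_j w_j$ and applying $\cH^{\rm T}J=-J\cH$, the two terms containing $\cH$ cancel and I am left with $\delta_f(s)=-(\lambda_i+\lambda_j)s$. If $s\neq0$, dividing by $s$ gives $-\lambda_j=\lambda_i+\delta_f(s)/s$, which is precisely the scalar ($n=1$) characterization that $(\lambda_i,-\lambda_j)$ is $\delta_f$-conjugate in the sense of Definition~\ref{dconj}, with $c=s$. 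Taking the contrapositive yields the claim: if $\lambda_i$ and $-\lambda_j$ are not $\delta_f$-conjugate, then $s=0$, i.e. (\ref{tpome}) holds.

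For the Hermitian case I would repeat the computation with $w_i^{*}$ replacing $w_i^{\rm T}$. The one extra ingredient is that $\delta_f$ commutes with complex conjugation: because $f$ is real, $\delta_f(a+bj)=\delta_f(a)+\delta_f(b)j$, so $\delta_f(z^{*})=\delta_f(z)^{*}$. Conjugate-transposing the eigenvector relation for $w_i$ and using that $\cH$ is real-analytic ($\cH^{*}=\cH^{\rm T}$) then gives $\delta_f(w_i^{*})=w_i^{*}\cH^{\rm T}-\lambda_i^{*}w_i^{*}$. Feeding this and $\delta_f(w_j)=\cH w_j-\lambda_j w_j$ into $\delta_f(\tilde s)=\delta_f(w_i^{*})Jw_j+w_i^{*}J\delta_f(w_j)$ and again using $\cH^{\rm T}J=-J\cH$ to cancel the $\cH$ terms, I get $\delta_f(\tilde s)=-(\lambda_i^{*}+\lambda_j)\tilde s$. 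As before, $\tilde s\neq0$ would force $-\lambda_j=\lambda_i^{*}+\delta_f(\tilde s)/\tilde s$, i.e. $(\lambda_i^{*},-\lambda_j)$ $\delta_f$-conjugate; so if $\lambda_i^{*}$ and $-\lambda_j$ are not $\delta_f$-conjugate, then $\tilde s=0$ and (\ref{elome}) holds.

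I expect the main obstacle to be the conjugation bookkeeping in the Hermitian case rather than the core algebra: one must verify carefully that $\delta_f$ passes through the componentwise conjugate and that the real-analyticity of $\cH$ is what converts $\lambda_i$ into $\lambda_i^{*}$, so that the resulting scalar equation lands on $(\lambda_i^{*},-\lambda_j)$ and not on $(\lambda_i,-\lambda_j)$. By contrast, the derivation-of-a-quadratic-form step and the cancellation via $\cH^{\rm T}J=-J\cH$ are routine once that identity is recorded, and the final identification with $\delta_f$-conjugacy is immediate from the $n=1$ form $b=a+\delta_f(c)/c$.
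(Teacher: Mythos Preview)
Your proposal is correct and follows essentially the same approach as the paper: both arguments use the identity $\cH^{\rm T}J+J\cH=0$ to derive the scalar relation $\delta_f(w_i^{*}Jw_j)+(\lambda_i^{*}+\lambda_j)\,w_i^{*}Jw_j=0$ (and its transpose analogue), and then argue by contraposition that a nonzero value forces the forbidden $\delta_f$-conjugacy. The only cosmetic differences are that you package the bilinear forms as $s,\tilde s$ and apply the product rule for $\delta_f$, whereas the paper expands $w_i^{*}(\cH^{\rm T}J+J\cH)w_j=0$ directly; the computations are identical.
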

\begin{proof}
We prove that the non $\delta_f$-conjugacy of $\lambda_i^*$ and $-\lambda_j$ $(i,j=1,\dots,n)$ implies (\ref{elome}) by contraposition.
That is, we show that $\omega_{i,j}-\omega_{j,i}^*\neq 0$ implies that $\lambda_i^*$ and $-\lambda_j$ are $\delta_f$-conjugate.
For $J$ in (\ref{J}), $\cH^{\rm T}J+J\cH=0$ holds.
Since elements of $\cH$ are real analytic functions, the definition of the right eigenvalue and eigenvector 
\begin{eqnarray}
\cH w_j=\delta_f(w_j)+\lambda_j w_j\label{chw}
\end{eqnarray}
yields
\begin{eqnarray}
w_i^* \cH^{\rm T}=\delta_f(w_i^*)+\lambda_i^* w_i^*.\label{chwel}
\end{eqnarray}
By computing $w_i^*(\cH^{\rm T}J+J\cH)w_j$ with (\ref{chw}) and (\ref{chwel}), we have
\begin{eqnarray*}
&&w_i^*(\cH^{\rm T}J+J\cH)w_j\\
&&=(\delta_f(w_i^*)+\lambda_i^* w_i^*)J w_j+w_i^*J(\delta_f(w_j)+\lambda_j w_j)=0.
\end{eqnarray*}
From (\ref{J}) and $w_i=[u_i^{\rm T} \ v_i^{\rm T}]^{\rm T}$, we have
\begin{eqnarray*}
&&(\delta_f(w_i^*)+\lambda_i^* w_i^*)J w_j+w_i^*J(\delta_f(w_j)+\lambda_j w_j)\\	
&&=\delta_f(u_i^*v_j-v_i^*u_j)+(\lambda_i^*+\lambda_j) (u_i^*v_j - v_i^*u_j).
\end{eqnarray*}
From (\ref{omgijdef}) and $u_i^*v_j-v_i^*u_j=\omega_{i,j}-\omega_{j,i}^*\neq 0$, the equality can be rewritten as
\begin{eqnarray*}
&&\delta_f(\omega_{i,j}-\omega_{j,i}^*)+(\lambda_i^*+\lambda_j) (\omega_{i,j}-\omega_{j,i}^*)=0,\\
&&\lambda_i^*+\delta_f(\omega_{i,j}-\omega_{j,i}^*)(\omega_{i,j}-\omega_{j,i}^*)^{-1}=-\lambda_j.
\end{eqnarray*}
Thus, $\lambda_i^*$ and $-\lambda_j$ are $\delta_f$-conjugate.
In a similar manner, it is possible to show that (\ref{tpome}) holds if $\lambda_i$ and $-\lambda_j$ $(i,j=1,\dots,n)$ are not $\delta_f$-conjugate.
\end{proof}

To analyze further, we investigate the $\delta_f$-conjugacy of pairs of $(\lambda, -\lambda)$ and $(\lambda, -\lambda^*)$.
\begin{secprop}\label{Himg1:prop}
Differential Hamiltonian matrix $\cH$ has no left (or right) nonlinear eigenvalue on the imaginary axis if and only if for any left (or right) nonlinear eigenvalue $\lambda$ of $\cH$, neither pair $(\lambda, -\lambda)$ nor $(\lambda, -\lambda^*)$ is $\delta_f$-conjugate. 
\end{secprop}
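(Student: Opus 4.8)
The plan is to reduce the whole statement to the scalar ($n=1$) $\delta_f$-conjugacy relation already recorded in the excerpt: a pair $(\alpha,\beta)\in\cK\times\cK$ is $\delta_f$-conjugate iff $\beta=\alpha+\delta_f(c)/c$ for some nonzero $c\in\cK$. I would first isolate two elementary facts. Since $f$ is real, $\delta_f$ commutes with complex conjugation, so $\delta_f(c)^*=\delta_f(c^*)$ and a pair is $\delta_f$-conjugate iff its complex-conjugate pair is. Second, writing $c=c_1+c_2j$ with $c_1,c_2\in\cK_{\bR}$, a short computation gives
\[
{\rm Re}\left(\frac{\delta_f(c)}{c}\right)=\frac{\delta_f(c_1^2+c_2^2)}{2(c_1^2+c_2^2)}=\frac{\delta_f(cc^*)}{2\,cc^*}.
\]
Hence the elements $\delta_f$-conjugate to $\alpha$ have real parts ${\rm Re}(\alpha)+\delta_f(p)/(2p)$ as $p=cc^*\in\cK_{\bR}$ ranges over norms. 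Here ``on the imaginary axis'' means ${\rm Re}(\lambda)=0$, equivalently $\lambda=-\lambda^*$, matching the usage in Theorem~\ref{Xprop:thm}~\ref{stab}). The left and right versions are mirror images in Definition~\ref{eig:d} (and are interchanged by Proposition~\ref{eigch:p}), so it suffices to argue one of them.

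For the necessity direction I would argue by contraposition. If some eigenvalue $\mu$ of $\cH$ lies on the imaginary axis, then $-\mu^*=\mu$, so the pair $(\mu,-\mu^*)=(\mu,\mu)$ is trivially $\delta_f$-conjugate (take $c=1$). Thus the existence of an on-axis eigenvalue already produces an eigenvalue $\lambda=\mu$ for which one of the two listed pairs is $\delta_f$-conjugate, which is exactly the negation of the right-hand condition.

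For the sufficiency direction, suppose an eigenvalue $\lambda$ makes $(\lambda,-\lambda^*)$ $\delta_f$-conjugate; then $\delta_f(c)/c=-(\lambda+\lambda^*)=-2{\rm Re}(\lambda)$ for some nonzero $c$. As the right-hand side is real, splitting $c$ into its real and imaginary parts shows that some nonzero $r\in\cK_{\bR}$ satisfies the scalar equation $\delta_f(r)/r=-2{\rm Re}(\lambda)$. I would then seek a square root $\rho\in\cK$ of $r$, i.e. $\rho^2=r$ (so $\rho$ is real where $r>0$ and $\rho=j\sqrt{-r}$ where $r<0$); in either case $\delta_f(\rho)/\rho=\frac{1}{2}\,\delta_f(r)/r=-{\rm Re}(\lambda)$ is real, so $\hat\lambda:=\lambda+\delta_f(\rho)/\rho$ is $\delta_f$-conjugate to $\lambda$---hence, by Proposition~\ref{lreig:p}, again an eigenvalue of $\cH$---and satisfies ${\rm Re}(\hat\lambda)=0$, i.e. lies on the imaginary axis. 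The case in which $(\lambda,-\lambda)$ is $\delta_f$-conjugate reduces to the same computation by passing to the norm: from $\delta_f(c)/c=-2\lambda$ and its conjugate $\delta_f(c^*)/c^*=-2\lambda^*$ one obtains $\delta_f(cc^*)/(cc^*)=-4{\rm Re}(\lambda)$, again a real logarithmic-derivative equation to which the same root extraction applies.

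The step I expect to be the main obstacle is precisely this last construction in the sufficiency direction: halving the logarithmic derivative, i.e. exhibiting within the differential field $\cK$ a genuine element $\rho$ with $\rho^2=r$ (or $\rho^2=-r$) so that the on-axis value $\hat\lambda$ is attained by an element of $\cK$ rather than only formally. Justifying that such a square root lies in $\cK$, and handling the sign of $r$ together with the locus where $r$ vanishes, is where the analytic structure of $\cK_{\bR}$ must be invoked rather than mere symbolic manipulation; the remaining bookkeeping---unifying the $(\lambda,-\lambda)$ and $(\lambda,-\lambda^*)$ cases and the left/right parallel---is routine given the two elementary facts above.
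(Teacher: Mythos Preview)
Your proposal is correct and follows essentially the same route as the paper: the easy direction is dispatched by observing that an on-axis eigenvalue $\mu$ satisfies $\mu=-\mu^*$, and the hard direction is handled by turning the $\delta_f$-conjugacy relation into a logarithmic-derivative equation and halving (or quartering) it via a root extraction, then invoking Proposition~\ref{lreig:p}~1). Two remarks. First, your labels ``necessity'' and ``sufficiency'' are swapped relative to the paper's, which reads the biconditional in the other order; this is purely conventional. Second, the concern you flag---that $a^{1/2}$ or $(aa^*)^{1/4}$ must actually lie in $\cK$---is exactly the step the paper also takes for granted, so you are not missing anything the authors supply; your reduction to a real $r$ before extracting the root is a slight refinement over the paper's direct passage to $aa^*$, but the analytic issue is the same in both versions.
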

\begin{proof}
(Necessity)
We prove by contraposition. First, suppose that $(\lambda, -\lambda)$ is $\delta$-conjugate. Then, there exists non-zero $a\in\cK$ such that $2\lambda=\delta(a)/a$, which implies $\lambda=\delta(a^{1/2})/a^{1/2}$. Thus, ($\lambda$,$0$) is $\delta_f$-conjugate with respect to $a^{1/2}$. From Proposition \ref{lreig:p} 1), $0$ is a left (or right) eigenvalue of $\cH$. Next, suppose that $(\lambda, -\lambda^*)$ is $\delta_f$-conjugate. Then, $2{\rm Re}(\lambda)=\lambda+\lambda^*=\delta(a)/a$ for some non-zero $a\in\cK$. Compute $4{\rm Re}(\lambda)=2{\rm Re}(\lambda)+2{\rm Re}(\lambda)^*=\delta(a)/a+\delta(a^*)/a^*=\delta(aa^*)/(aa^*)$, where $aa^*$ is real valued, and consequently $4\lambda - \delta(aa^*)/(aa^*)= 4j{\rm Im}(\lambda)$. Thus, ($\lambda$,$j{\rm Im}(\lambda)$) is $\delta_f$-conjugate with respect to $(aa^*)^{1/4}$. Therefore, $\cH$ has a left (or right) eigenvalue on the imaginary axis. 

(Sufficiency)
We prove by contraposition. Let $\lambda$ be a left (or right) eigenvalue of $\cH$ on the imaginary axis. Then, $\lambda=-\lambda^*$. That is, $(\lambda,-\lambda^*)$ is $\delta_f$-conjugate. Moreover, if $\lambda=0$, $(\lambda,-\lambda)$ is $\delta_f$-conjugate.
\end{proof}

Now, we are ready to prove \ref{heluv}).
\begin{proof}[Proof of \ref{heluv})]
Let $\{w_1,\dots,w_{2n}\}$ be the set of linearly independent eigenvectors of $\cH$ associated with eigenvalues $\lambda_1,\dots,\lambda_{2n}$. Here, we show that $w_1,\dots,w_n$ can be chosen such that neither $(\lambda_i,-\lambda_j^*)$ nor $(\lambda_i,-\lambda_j)$ is $\delta_f$-conjugate for any $i,j=1,\dots,n$. Then, Proposition \ref{rsym:p} implies that $U^*V$ is Hermitian, and $U^{\rm T}V$ is symmetric for $W={\rm span}_{\cK}\{w_1,\dots,w_n\}$.

Let $\{a_1,\dots,a_r\}$ be the set of eigenvalues, where $(a_i,a_j)$ is not $\delta_f$-conjugate for any $i \neq j$, such that each $\lambda_i$ $(i=1,\dots,2n)$ is $\delta_f$-conjugate to one of its elements. First, we focus on $a_1$. According to Theorem \ref{Xprop:thm} \ref{feig}), $-a_1$, $a_1^*$, and $-a_1^*$ are also eigenvalues of $\cH$. From Proposition \ref{Himg1:prop}, $(a_1,-a_1)$ is not $\delta_f$-conjugate. That is, one of $a_2,\dots,a_r$ can be chosen as $-a_1$. Here, we chose $a_2=-a_1$ without loss of generality. Moreover, if $(a_1,a_1^*)$ is not $\delta_f$-conjugate, none of pair $(b,c)$ $(b\neq c; b,c\in\{a_1,-a_1,a_1^*,-a_1^*\})$ is $\delta_f$-conjugate. Then, we can choose $a_3=a_1^*$ and $a_4=-a_1^*$ without loss of generality. 

We perform a similar procedure for $a_5,\dots,a_r$. Then, we notice that $r$ is an even number, i.e. $r=2\hat r$ for some $\hat r$. Consider $\{a_1,a_3,\dots,a_{2\hat r-1}\}$. Then, neither $(a_i,-a_j)$ nor $(a_i,-a_j^*)$ $(i,j=1,3,\dots,2\hat r-1)$ is $\delta_f$-conjugate. Also, for the set $\{a_2,a_4,\dots,a_{2\hat r}\}$, neither $(a_i,-a_j)$ nor $(a_i,-a_j^*)$ $(i,j=2,4,\dots,2\hat r)$ is $\delta_f$-conjugate. Therefore, if we construct $W$ by using the eigenvectors of $\cH$ associated with the eigenvalues, which are $\delta_f$-conjugate to one of $a_1,a_3,\dots,a_{2\hat r-1}$, or the eigenvectors associated with the eigenvalues, which are $\delta_f$-conjugate to one of $a_2,a_4,\dots,a_{2\hat r}$, then $\lambda_i$ $(i=1,\dots,n)$ satisfy the conditions in Proposition \ref{rsym:p}.
\end{proof}
\subsection{Proof of \ref{regu})}
\begin{proof}[Proof of \ref{regu})]
Here, we prove \ref{regu}) only for regularity of $V$ when $U^* V$ is Hermitian. In a similar manner, we can prove the other cases. 

(Sufficiency)
We prove this by contraposition. Let $V$ be not regular. There exists a non-zero $v$ such that
\begin{eqnarray}
V v=0.\label{uvze}
\end{eqnarray}
The lower half of (\ref{riegh}) is $-QU -A^{\rm T}V- \delta_f(V)=V\Lambda$.
By multiplying $v$, we have, from (\ref{uvze}),
\begin{eqnarray}
-QU v - \delta_f(V) v=V \Lambda v.\label{HxUv}
\end{eqnarray}
Note that from (\ref{uvze}), $\delta_f(Vv)=\delta_f(V)v+V\delta_f(v)=0$ holds, which yields $-\delta_f(V)v=V\delta_f(v)$.
By using this, (\ref{HxUv}) can be rewritten as 
\begin{eqnarray}
-QU v +V\delta_f(v)=V\Lambda v.\label{eigeqch}
\end{eqnarray}
By premultiplying $v^*U^*$,  from  $U^*V=V^*U$, we obtain
\begin{eqnarray*}
-v^*U^*QU v +v^*V^*U\delta_f(v)=v^*V^*U\Lambda v.
\end{eqnarray*}
Since $Vv=0$, the above equation implies
\begin{eqnarray}
QU v=0,\label{huv0}
\end{eqnarray}
and thus, from (\ref{eigeqch}),
\begin{eqnarray}
V(\Lambda v-\delta_f(v))=0.\label{vlvd0}
\end{eqnarray}
Note that (\ref{huv0}) and (\ref{vlvd0}) hold for all $v$ satisfying $Vv=0$.

Next, we show the existence of $\lambda$ and non-zero $\hat{v}$ satisfying $V\hat{v}=0$ and
\begin{eqnarray}
\Lambda \hat{v}-\delta_f(\hat{v})=\lambda \hat{v}. \label{lvdvlv}
\end{eqnarray}
We assume that $v_1$, the first element of $v$, is non-zero. Then, from (\ref{uvze}) we have $V(v/v_1)=0$ and from (\ref{vlvd0}),
\begin{eqnarray*}
(1/v_1)V(\Lambda v-\delta_f(v))=V(\Lambda (v/v_1) -  \delta_f(v)/v_1)=0.
\end{eqnarray*}
Also, by using $\delta_f(v/v_1)=\delta_f(v)/v_1+\delta_f(1/v_1)v$ and (\ref{uvze}), we obtain
\begin{eqnarray*}
&&V(\Lambda (v/v_1) - \delta_f(v/v_1)+\delta_f(1/v_1)v)\\
&&=V(\Lambda (v/v_1) - \delta_f(v/v_1))\\
&&=V(\Lambda (v/v_1) - \delta_f(v/v_1)-\lambda_1 (v/v_1))=0.
\end{eqnarray*}
This equality can also be expressed as $V\bar v=0$, where
\begin{eqnarray*}
\bar v:=
\left[\begin{array}{c}\lambda_1\\ \lambda_2 (v_2/v_1)\\ \vdots \\ \lambda_n (v_n/v_1) \end{array}\right]
-\left[\begin{array}{c}0\\ \delta_f(v_2/v_1)\\ \vdots \\ \delta_f(v_n/v_1)
\end{array}\right]-\left[\begin{array}{c}\lambda_1\\ \lambda_1 (v_2/v_1)\\ \vdots \\ \lambda_1 (v_n/v_1)\end{array}\right].
\end{eqnarray*}
If $\bar v=0$, let $\hat v:=v/v_1$ and $\lambda:=\lambda_1$.
Then, $\hat v$ and $\lambda$ satisfy $V\hat{v}=0$ and (\ref{lvdvlv}).
Otherwise, let $v:=\bar v$.
Then $v_1$, the first element of $v$, is zero.
This $v$ satisfies $Vv=0$ and thus (\ref{vlvd0}).
We assume that $v_2$, the second element of $v$, is non-zero and repeat the above procedure for $v$.
Finally,  there exists $i\le n$ such that $v=[0 \ \cdots \ 0 \ v_i \ 0 \cdots \  0]^{\rm T}$ $(v_i\neq 0)$.
For $\hat{v}:=v/v_i$ and $\lambda=\lambda_i$, $V\hat{v}=0$ and (\ref{lvdvlv}) hold.
In summary, there exist $\lambda$ and non-zero $\hat{v}$ satisfying $V\hat{v}=0$ and (\ref{lvdvlv}).

From the upper half of (\ref{riegh}), $V\hat{v}=0$ and (\ref{lvdvlv}), we have
\begin{eqnarray*}
&&A U\hat{v}+ RV\hat{v}-\delta_f(U)\hat{v}=U\Lambda \hat{v},\\
&&A U\hat{v}-\delta_f(U)\hat{v}-U\delta_f(\hat{v})=\lambda U\hat{v},\\
&&A U\hat{v}-\delta_f(U\hat{v})=\lambda U\hat{v},
\end{eqnarray*}
where $U\hat{v}\neq 0$. Otherwise, $[U^{\rm T} \ V^{\rm T}]^{\rm T}\hat v=0$, i.e., the column vectors of $[U^{\rm T} \ V^{\rm T}]^{\rm T}$ are linearly dependent, which contradicts that $W\subset\cK^{2n}$ in Theorem \ref{ham:t} is an $n$-dimensional subspace.
Since $\hat{v}$ satisfies (\ref{huv0}), i.e., $QU\hat v=0$, (\ref{UVR}) holds for $w:=U\hat{v}$ and $\lambda$.

(Necessity)
Here, we prove by contraposition. That is, we show that if there is some $\lambda_i$ in Theorem \ref{Xprop:thm} \ref{digLm}) satisfying (\ref{UVR}) for non-zero $u\in\cK^n$ such that $[u^{\rm T} \ 0^{\rm T}]^{\rm T}\in W$, then $V$ is not regular. Let $w_i\in W$ $(i=1,\dots,n)$ be a right eigenvector of $\cH$ associated with an eigenvalue $\lambda_i$ $(i=1,\dots,n)$. If we choose $\lambda_i$ $(i=1,\dots,n)$ such that Theorem \ref{Xprop:thm} \ref{digLm}) holds, we have
\begin{eqnarray}
	\left[\begin{array}{c}
		U \\ V
	\end{array}\right]
	=\left[\begin{array}{ccc}
	w_1&\cdots&w_n
	\end{array}\right].\label{UVw}
\end{eqnarray}
for $U,V$ in (\ref{Wdef}).  In fact, one of $w_i$ can be chosen as $[u^{\rm T} \ 0^{\rm T}]^{\rm T}$ as follows. For $\lambda_i$ and non-zero $u$ satisfying (\ref{UVR}), we have
\begin{eqnarray*}
\left[\begin{array}{cc}
A & R \\
-Q & -A^{\rm T}
\end{array}\right]
\left[\begin{array}{c}
u\\ 0
\end{array}\right]
-\left[\begin{array}{c}
\delta_f(u)\\ 0
\end{array}\right]
=\lambda_i
\left[\begin{array}{c}
u\\ 0
\end{array}\right],
\end{eqnarray*}
which implies that $[u^{\rm T} \ 0^{\rm T}]^{\rm T}$ is a right eigenvector of $\cH$ associated with $\lambda_i$. Furthermore, since $[u^{\rm T} \ 0^{\rm T}]^{\rm T}\in W$, one of $w_i$ can be chosen as $w_i=[u^{\rm T} \ 0^{\rm T}]^{\rm T}$. Then, $V$ is not regular for a basis $\{w_1,\dots,w_n\}$ of $W$. Note that from Remark~\ref{bss:rem}, regularity of $V$ does not depend on the choice of basis.
\end{proof}
\subsection{Proof of \ref{stab})}
\begin{proof}[Proof of \ref{stab})]
From (\ref{XVUtr}), if $U$ is regular, positive semidefiniteness of $X$ and $U^*V$ are equivalent. Here, we prove positive semidefiniteness of $U^*V$. By respectively multiplying the upper and lower parts of (\ref{riegh}) by $V^*$ and $U^*$ from the left,
\begin{eqnarray}
&& V^* A U - V^* R V - V^* \delta_f (U) = V^* U \Lambda, \label{V}\\
&& - U^* Q U - U^* A^{\rm T} V -U^*  \delta_f (V) = U^* V \Lambda. \label{U}
\end{eqnarray}
By adding the complex conjugate of (\ref{U}) to (\ref{V}),
\begin{eqnarray}
\delta_f (V^* U) +V^* U \Lambda + \Lambda^* V^* U = - V^* R V - U^* Q U.\label{lyacon}
\end{eqnarray}
From the assumption for $Q$, there exists a symmetric and positive semidefinite matrix $\bar U\in\bR^{n\times n}$ such that $-V^* R V + U^* Q U \le -\bar U$ for all $(x,t)\in\bR^n\times\bR$. Consider linear time-varying system $d \delta  z/dt = \Lambda (\phi(x_0,t),t) \delta z$ along trajectory $\phi(x_0,t)$ of $\dot x = f(x,t)$ with the initial condition $x(t_0)=x_0$. Then, we have, from (\ref{lyacon}),
\begin{eqnarray}
&&\frac{d}{dt}(\delta z^*(t)V^*(\phi(x_0,t),t)U(\phi(x_0,t),t) \delta z(t))\nonumber\\
&& \le - \delta z^*(t) \bar U \delta z(t).\label{diffinW}
\end{eqnarray}
Since ${\rm Re}(\lambda_i)<c$ $(i=1,\dots,n)$, i.e., ${\rm Re}(\Lambda)< c I_n$ for all $(x,t)\in \bR^n\times \bR$, the linear time-varying system is uniformly asymptotically stable at the origin \cite{Khalil:96}. Therefore the time integral of (\ref{diffinW}) is
\begin{eqnarray*}
&&\delta z^*(t_0)V^*(x_0,t_0)U(x_0,t_0) \delta z(t_0)\\
&&= \int_{t_0}^{\infty} \delta z^*(t) \bar U \delta z(t) dt \ge 0
\end{eqnarray*}
for any $\delta z(t_0)\in\bR^n$.
Therefore, $V^*U$ is symmetric and positive semidefinite at each $(x_0,t_0)\in \bR^n\times \bR$.
\end{proof}
\section{Conclusion}\label{Con:s}
In this paper, we presented a nonlinear eigenvalue method for the DRE for contraction analysis. First, we showed that all solutions to the DRE can be expressed as functions of nonlinear eigenvectors of the corresponding differential Hamiltonian matrix. Next, in the simple case, we studied solution structures, e.g. real symmetricity and regularity. Future work includes relaxing the simplicity assumption and constructing methods for finding nonlinear eigenvectors of the Hamiltonian matrix. As a solution method to the HJE, the generating function method \cite{park2006determination,park2006solving} is known. For the ARE, this method is useful for finding other eigenvectors of a Hamiltonian matrix from its eigenvectors, and this method may be extended to the DRE.
\bibliographystyle{IEEEtran} 
\bibliography{hceref}
\end{document}